\newtheorem{theorem}{Theorem}[section]
\newtheorem{lemma}[theorem]{Lemma}
\newtheorem{corollary}[theorem]{Corollary}
\theoremstyle{definition}
\newtheorem{definition}[theorem]{Definition}
\newtheorem{remark}[theorem]{Remark}
\numberwithin{equation}{section}
\begin{document}
\title[Exploring certain geometric and harmonic properties\ldots]{Exploring certain geometric and harmonic properties of the Berger-type metric conformal deformation on the Para-K\"{a}hler-Norden manifold}
\author{Abderrahim Zagane}
\address{Laboratory Analysis, geometry and its applications, Department of Mathematics, Ahmed Zabana Relizane University, 48000, Relizane-Algeria}
\email{Zaganeabr2018@gmail.com}
\author{Fethi Latti}
\address{Salhi Ahmed Naama University Center, Department of mathematics, 45000, Naama-Algeria}
\email{etafati@hotmail.fr}

\begin{abstract}
This work presents a novel class of metrics on a para-K\"{a}hler-Norden manifold $(M^{2m},F,g)$, derived from a conformal deformation of the Berger-type metric associated with the metric $g$. Initially, we examine the Levi-Civita link associated with this metric. Secondly, we delineate all varieties of curvature for a manifold $M$ equipped with a conformal deformation of  Berger-type metric for $g$.  Finally, we studied a certain class of harmonic maps.
	
\textbf{2010 Mathematics subject classifications:} Primary  53C20, 53C55, 53C05;
Secondary 53C43, 58E20.
	
\textbf{Keywords:} Para-K\"{a}hler-Norden manifold, conformal deformation of Berger-type metric, curvature tensor, harmonic map.
\end{abstract}

\maketitle 

\section{Introduction}

Consider a $n$-dimensional Riemannian manifold $(M^{n}, g)$ and $\nabla$ represent the Levi-Civita connection associated with $g$, as defined by the Koszul formula.

Let $\alpha$ be a smooth function on $M$, the gradient of $\alpha$, denoted by $grad^{}\alpha$ is defined by
\begin{equation*}
g(grad^{}\alpha,X)=X(\alpha).
\end{equation*}
If $\{e_{i}\}_{i=1,\ldots,n}$ is a local orthonormal frame on $(M^{n},g)$, then 
the $grad^{}\alpha$ is expressed by 
 \begin{equation*}
grad^{}\alpha= \sum_{i=1}^{n} e_{i}(\alpha)e_{i}.
 \end{equation*}
The Hessian of $\alpha$, denoted by $Hess_{\alpha}$ is given by 
\begin{equation*} \label{Hess}
Hess_{\alpha}(X,Y)=g(\nabla _{X}grad^{}\alpha,Y),
\end{equation*}
for each  vector fields $X$ and $Y$ on $M$.

The Laplacian of $\alpha$ on $M$, denoted $\Delta (\alpha)$, is expressed by
\begin{equation*}
\Delta (\alpha)=Tr_{g}(Hess_{\alpha})=\sum_{i=1}^{n} g(\nabla_{e_{i}}grad^{}\alpha,e_{i}).
\end{equation*}

A vector field $X$ on $M$ is called a killing vector field if $ L_{X}g = 0$, where $L_{X}$ denotes the Lie differentiation with respect to $X$ or equivalently
\begin{equation*}
g(\nabla _{Y}X,Z)+g(\nabla _{Z}X,Y)=0,
\end{equation*}
for each vector fields $Y$ and $Z$ on $M$. 

If $grad^{}\alpha$ is a Killing vector field, then the smooth function $\alpha$ on $M$ is called a Killing potential. In this case, the $grad^{}\alpha$ is also called a killing gradient sse \cite{Cra}. Furthermore, $\alpha$ is Killing potential if and only if $Hess_{\alpha}=0$ or, equivalently, $\nabla _{X}grad^{}\alpha=0$, for each vector fields $X$ on $M$.

Given a $2m$-dimensional differentiable manifold $M^{2m}$. An almost para-complex structure $F$ is a $(1,1)$ tensor field on $M$ such that $F^{2}=I$ ($I$ is the identity tensor field of type $(1,1)$ on $M$) and the two eigenbundles $TM^{+}$, $TM^{-}$ associated to the two eigenvalues $+1$, $-1$ of $F$, respectively have the same rank. The pair $(M,F)$ is called an almost para-complex manifold.

An almost para-complex Norden manifold $(M^{2m}, F, g)$ is an almost para-complex manifold $(M^{2m},F)$ with a Riemannian metric $g$ such that:
\begin{eqnarray*}
g(F X, Y)= g(X, F Y),
\end{eqnarray*}
for each vector fields $X$ and $Y$ on $M$, then the Riemannian metric $g$ is called para-Norden metricin or pure metric with respect to the almost paracomplex structure $F$. 

A para-K\"{a}hler-Norden manifold is an almost para-complex Norden manifold $(M^{2m}, F, g)$ such that $\nabla F = 0$, where $\nabla$ is the Levi-Civita connection of $g$ \cite{S.I.E, S.G.I}.

In para-K\"{a}hler-Norden manifold, the Riemannian curvature tensor is pure \cite{S.I.E}. Furthermore, we have
\begin{eqnarray}\label{curva-pure}
\begin {aligned}
&R(F X,Y)Z = R(X,F Y)Z =R(X,Y)F Z= F R(X,Y)Z,
\end{aligned} 
\end{eqnarray}		
for each vector fields $X$, $Y$ and $Z$ on $M$.

In previous work \cite{D.Z4}, we introduced a new class of metrics on the Riemannian manifold $(M^{m},g)$, which are defined as follows:  
\begin{equation*}
G(X,Y) =fg(X,Y)+g(\xi,X)g(\xi,Y),
\end{equation*}
for each vector fields $X, Y$ and $\xi$ on $M$, such that $\xi$ is a unit parallel vector field on $M$ and $\xi(f)=0$, with $f$ being a strictly positive smooth function on $M$. Several geometric properties related to this metric were studied.

Furthermore, in an independent study \cite{Zag26}, we introduced an innovative class of metric on the $B$-manifold $(M^{2m},\varphi,g)$, expressed by: 
\begin{equation*}
{}^{GB}\!g(X,Y) =g(X,Y)+fg(X, \varphi\xi)g(Y, \varphi\xi),
\end{equation*}
for each vector fields $X, Y$ and $\xi$ on $M$, such that $\xi$ is a unit parallel vector field on $M$ and $(\varphi\xi)(f)=0$, where $f$ is a positive smooth function on $M$. We investigated certain geometric and harmonic characteristics of the $B$-manifold concerning this metric. 

Drawing from the study as mentioned earlier, we present a novel class of metric on the para-K\"{a}hler-Norden manifold $(M^{2m},F,g)$, which is represented as follows: 
\begin{equation*}
g^{\alpha}(X,Y)=\alpha\big(g(X,Y)+g(X,FV)g(Y,FV)\big),
\end{equation*}
for each vector fields $X,Y$ on $M$ and $V$ is a unit parallel vector field on $M$ such that 
$(FU)(\alpha)=0$. We call $g^{\alpha}$ a conformal deformation of Berger-type metric of $g$.

In this study, we explore a Berger-type metric conformal deformation of $g$ on the para-K\"{a}hler-Norden manifold $(M^{2m},F,g)$. After describing the introduction, we present the fundamental properties of the conformal deformation of the Berger-type metric of $g$ (including the Levi-Civita connection) in section \ref{Sec2}. In Section \ref{Sec3}, we study all types of curvature. First, we examine the curvature tensor and the sectional curvature. Furthermore, we characterize the Ricci tensor, the Ricci curvature, and the scalar curvature. In Section \ref{Sec4}, we explore the harmonicity of the identity map between two manifolds, one of which is a Riemannian manifold and the other a para-K\"{a}hler-Norden manifold, or vice versa. We also talk about the harmonicity of a smooth map between two Riemannian manifolds, one of which has a Berger-type conformal deformation. In the last part of this section, we study the harmonicity of the curve in the Riemannian manifold endowed with a Berger-type metric conformal deformation of $g$. In the last section, we study the biharmonicity of the identity map, where we establish the necessary and sufficient conditions under which the identity map is a proper biharmonic.

\section{Fundamental properties of the Berger-type metric conformal deformation}\label{Sec2}
\begin{definition}
Given an almost para-complex Norden manifold $(M^{2m},F,g)$ and a strictly positive smooth function $\alpha : M\rightarrow]0,+\infty[$.  A Berger-type metric conformal deformation of $g$ on $M$ noted $g^{\alpha}$ is define by
\begin{eqnarray*}
g^{\alpha}(X,Y)=\alpha(x)\left( g(X,Y)+g(X,FV)g(Y,FV)\right) ,
\end{eqnarray*}
for each vector fields $X,Y$ on $M$ and $V$ is a unit parallel vector field on $M$ such that 
\begin{eqnarray}\label{eq_A}
\begin {aligned}
&(FV)(\alpha)=0.
\end{aligned}
\end{eqnarray}
\end{definition}
We conclude from the definition the following:
\begin{eqnarray}\label{eq_B}
\left\{\begin {aligned}
&g(FV,FV)= 1, \\
&g(X,FV)= \frac{1}{2\alpha}g^{\alpha}(X,FV),\\
&X(\alpha)=\frac{1}{\alpha}g^{\alpha}(X,grad^{}\alpha),\\
&Hess_{\alpha}(X,FV)=0, \\
&R(X,Y)V=0,
\end{aligned} \right.
\end{eqnarray}
for each vector field $X$ on $M$.

\begin{lemma}\label{lem_1}
Given a para-K\"{a}hler-Norden manifold $(M^{2m},F,g)$, then we have
\begin{eqnarray}\label{eq_C}
X g^{\alpha}(Y,Z) &=&g^{\alpha}(\nabla_{X}Y,Z)+g^{\alpha}(Y,\nabla_{X}Z)+\frac{X(\alpha)}{\alpha}g^{\alpha}(Y,Z),
\end{eqnarray}
for each vector fields $X,Y$ and $Z$ on $M$.
\end{lemma}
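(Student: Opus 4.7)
The plan is to expand $Xg^{\alpha}(Y,Z)$ directly from the definition and then reassemble the result using the Leibniz rule together with the compatibility of $\nabla$ with $g$. Writing
\begin{equation*}
g^{\alpha}(Y,Z)=\alpha\,g(Y,Z)+\alpha\,g(Y,FV)g(Z,FV),
\end{equation*}
I would apply $X$ as a derivation to get a term $X(\alpha)\bigl(g(Y,Z)+g(Y,FV)g(Z,FV)\bigr)$ plus two terms of the form $\alpha\,X g(Y,Z)$ and $\alpha\,X\bigl(g(Y,FV)g(Z,FV)\bigr)$. The first of these recombines immediately into $\frac{X(\alpha)}{\alpha}g^{\alpha}(Y,Z)$, which is exactly the last term on the right-hand side of \eqref{eq_C}.

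The key observation for handling the remaining two terms is that $FV$ is parallel. Indeed, $V$ is parallel by hypothesis and the para-K\"{a}hler-Norden assumption gives $\nabla F=0$, hence $\nabla_X(FV)=(\nabla_X F)V+F(\nabla_X V)=0$. Consequently, $Xg(Y,FV)=g(\nabla_X Y,FV)$ and $Xg(Z,FV)=g(Z,\nabla_X(FV))=g(\nabla_X Z,FV)$ have no extra contributions. Combined with the standard compatibility $Xg(Y,Z)=g(\nabla_X Y,Z)+g(Y,\nabla_X Z)$, the two remaining terms factor as
\begin{equation*}
\alpha\bigl(g(\nabla_X Y,Z)+g(\nabla_X Y,FV)g(Z,FV)\bigr)+\alpha\bigl(g(Y,\nabla_X Z)+g(Y,FV)g(\nabla_X Z,FV)\bigr),
\end{equation*}
which is precisely $g^{\alpha}(\nabla_X Y,Z)+g^{\alpha}(Y,\nabla_X Z)$ by the definition of $g^{\alpha}$.

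There is essentially no obstacle beyond recognizing that $FV$ is parallel; the identity is a routine derivation computation once that fact is in place. The hypothesis $(FV)(\alpha)=0$ from \eqref{eq_A} does not enter at this stage, and the remaining relations in \eqref{eq_B} are not needed either, which suggests the lemma is really just the compatibility law for $g^{\alpha}$ with respect to the original Levi-Civita connection, upgraded by the conformal factor $\alpha$.
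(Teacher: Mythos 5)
Your proof is correct and is precisely the routine verification the paper relies on (the lemma is stated there without proof): expand $Xg^{\alpha}(Y,Z)$ from the definition, use $\nabla g=0$ together with $\nabla_{X}(FV)=(\nabla_{X}F)V+F\nabla_{X}V=0$, and regroup the terms into $g^{\alpha}(\nabla_{X}Y,Z)+g^{\alpha}(Y,\nabla_{X}Z)+\frac{X(\alpha)}{\alpha}g^{\alpha}(Y,Z)$. Only a typographical slip: in your chain $Xg(Z,FV)=g(Z,\nabla_{X}(FV))=g(\nabla_{X}Z,FV)$ the middle expression is actually $0$; the intended statement is $Xg(Z,FV)=g(\nabla_{X}Z,FV)+g(Z,\nabla_{X}(FV))=g(\nabla_{X}Z,FV)$, which is what your argument uses.
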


We will now calculate the Levi-Civita connection $\widetilde{\nabla}$ for $(M^{2m},g^{\alpha})$ which is given by the following theorem.

\begin{theorem}\label{th_1}
Given a para-K\"{a}hler-Norden manifold $(M^{2m},F,g)$, the Levi-Civita connection $\widetilde{\nabla}$ of $(M^{2m}, g^{\alpha})$, is expressed by:
\begin{eqnarray}\label{eq_D}
\widetilde{\nabla}_{X}Y &=& \nabla_{X}Y+\frac{X(\alpha)}{2\alpha}Y+\frac{Y(\alpha)}{2\alpha}X-\frac{1}{2\alpha^{2}}g^{\alpha}(X,Y)grad^{}\alpha,
\end{eqnarray}
for each vector fields $X$ and $Y$ on $M$, where $\nabla$ is the Levi-Civita connection of $g$.
\end{theorem}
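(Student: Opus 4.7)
The plan is to invoke the uniqueness of the Levi-Civita connection: I will define $\widetilde{\nabla}_{X}Y$ by the right-hand side of \eqref{eq_D}, verify that it is torsion-free and compatible with $g^{\alpha}$, and conclude by uniqueness. An equivalent route is to apply the Koszul formula for $g^{\alpha}$ directly, using Lemma \ref{lem_1} to unwrap the three derivative terms; the two approaches produce the same cancellation pattern, and I will present the first since it is slightly cleaner.

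The torsion-free check is immediate: writing $T(X,Y):=\widetilde{\nabla}_{X}Y-\widetilde{\nabla}_{Y}X-[X,Y]$, the correction term $\frac{X(\alpha)}{2\alpha}Y+\frac{Y(\alpha)}{2\alpha}X-\frac{1}{2\alpha^{2}}g^{\alpha}(X,Y)\operatorname{grad}\alpha$ is manifestly symmetric in $X,Y$, so $T(X,Y)=\nabla_{X}Y-\nabla_{Y}X-[X,Y]=0$ by the torsion-freeness of $\nabla$. For metric compatibility, I will compute $g^{\alpha}(\widetilde{\nabla}_{X}Y,Z)+g^{\alpha}(Y,\widetilde{\nabla}_{X}Z)$ from the defining formula and collect terms. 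The terms involving $\nabla$ give $g^{\alpha}(\nabla_{X}Y,Z)+g^{\alpha}(Y,\nabla_{X}Z)$, which by Lemma \ref{lem_1} equals $Xg^{\alpha}(Y,Z)-\tfrac{X(\alpha)}{\alpha}g^{\alpha}(Y,Z)$. The remaining six correction terms are the crux of the calculation: two copies of $\tfrac{X(\alpha)}{2\alpha}g^{\alpha}(Y,Z)$ (which combine to produce exactly the missing $\tfrac{X(\alpha)}{\alpha}g^{\alpha}(Y,Z)$) together with four cross terms that should cancel in pairs.

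The cancellation of the cross terms is where the identity $Z(\alpha)=\tfrac{1}{\alpha}g^{\alpha}(\operatorname{grad}\alpha,Z)$ from \eqref{eq_B} does the essential work: it converts $-\tfrac{g^{\alpha}(X,Y)}{2\alpha^{2}}g^{\alpha}(\operatorname{grad}\alpha,Z)$ into $-\tfrac{g^{\alpha}(X,Y)}{2\alpha}Z(\alpha)$, which I would like to cancel against $\tfrac{Z(\alpha)}{2\alpha}g^{\alpha}(Y,X)$ arising from the $Y$-slot contribution $g^{\alpha}(Y,\widetilde{\nabla}_{X}Z)$; symmetrically the other pair cancels. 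Once these pairings are verified, one reads off exactly $Xg^{\alpha}(Y,Z)$, establishing compatibility.

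I do not expect any serious obstacle: the only subtlety is keeping careful track of which $g^{\alpha}$-inner products correspond to which slots of the correction, and making sure the conversion between $Z(\alpha)$ and $\tfrac{1}{\alpha}g^{\alpha}(\operatorname{grad}\alpha,Z)$ is applied consistently. The para-K\"{a}hler-Norden hypothesis is not used explicitly at this step, since it has already been absorbed into the proof of Lemma \ref{lem_1} (through $\nabla(FV)=0$, which follows from $\nabla F=0$ and $\nabla V=0$); the present theorem only needs the conclusion of that lemma together with the algebraic identity $X(\alpha)=\tfrac{1}{\alpha}g^{\alpha}(X,\operatorname{grad}\alpha)$.
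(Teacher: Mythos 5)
Your proof is correct, but it is organized differently from the paper's. The paper derives \eqref{eq_D} by writing out the Koszul formula for $g^{\alpha}$, expanding each of the three derivative terms with Lemma \ref{lem_1}, and then collapsing the result using $X(\alpha)=\tfrac{1}{\alpha}g^{\alpha}(X,\operatorname{grad}\alpha)$ from \eqref{eq_B}; this produces the formula without assuming it in advance. You instead take the right-hand side of \eqref{eq_D} as a candidate connection, observe that the correction term is symmetric in $X,Y$ (so torsion-freeness is inherited from $\nabla$), and verify $g^{\alpha}$-compatibility, concluding by uniqueness of the Levi-Civita connection. Your compatibility computation is sound: the two half-terms $\tfrac{X(\alpha)}{2\alpha}g^{\alpha}(Y,Z)$ recombine with the output of Lemma \ref{lem_1}, and the cross terms cancel in the two pairs you indicate precisely because $Y(\alpha)=\tfrac{1}{\alpha}g^{\alpha}(Y,\operatorname{grad}\alpha)$ and $Z(\alpha)=\tfrac{1}{\alpha}g^{\alpha}(Z,\operatorname{grad}\alpha)$, both of which rest on \eqref{eq_A} exactly as \eqref{eq_B} does. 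Your remark that the para-K\"{a}hler-Norden hypothesis enters only through Lemma \ref{lem_1} (via $\nabla(FV)=0$) is also accurate. The trade-off between the two routes is the usual one: the paper's Koszul computation is self-contained and constructive, whereas your argument is shorter and makes the torsion-free property transparent, at the cost of presupposing the formula to be verified; the essential algebraic content (Lemma \ref{lem_1} plus the gradient identity) is the same in both.
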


\begin{proof}
By \eqref{eq_C},  and using Kozul formula we obtain:	
\begin{eqnarray*}
2g^{\alpha}(\widetilde{\nabla}_{X}Y,Z) &=&X g^{\alpha}(Y,Z)+Y g^{\alpha}(Z,X)-Z g^{\alpha}(X,Y)+ g^{\alpha}(Z,[X,Y])  \\
&&+ g^{\alpha}(Y,[Z,X])- g^{\alpha}(X,[Z,Y])  \\
&=& g^{\alpha}(\nabla_{X}Y,Z)+g^{\alpha}(\nabla_{X}Z,Y)+\frac{X(\alpha)}{\alpha}g^{\alpha}(Y,Z) \\
&& +g^{\alpha}(\nabla_{Y}Z,X)+g^{\alpha}(\nabla_{Y}X,Z)+\frac{Y(\alpha)}{\alpha}g^{\alpha}(Z,X) \\
&& -g^{\alpha}(\nabla_{Z}X,Y)-g^{\alpha}(\nabla_{Z}Y,X)-\frac{Z(\alpha)}{\alpha}g^{\alpha}(X,Y) \\
&& +g^{\alpha}(Z,\nabla_{X}Y)-g^{\alpha}(Z,\nabla_{Y}X)+g^{\alpha}(Y,\nabla_{Z}X) \\
&& -g^{\alpha}(Y,\nabla_{X}Z)-g^{\alpha}(X,\nabla_{Y}Z)+g^{\alpha}(X,\nabla_{Z}Y).
\end{eqnarray*}
From, \eqref{eq_B}, we get,
\begin{eqnarray*}
2g^{\alpha}(\widetilde{\nabla}_{X}Y,Z) &=& 2g^{\alpha}(\nabla_{X}Y,Z)+\frac{X(\alpha)}{\alpha}g^{\alpha}(Y,Z)+\frac{Y(\alpha)}{\alpha}g^{\alpha}(X,Z) \\
&&-\frac{1}{\alpha^{2}}g^{\alpha}(X,Y)g^{\alpha}(grad^{}\alpha,Z).
\end{eqnarray*}
This completes the proof.
\end{proof}

Using \eqref{eq_B} and \eqref{eq_D}, we obtain the following: 
\begin{corollary}\label{cor_01}
Given a para-K\"{a}hler-Norden manifold $(M^{2m},F,g)$, then the following equality holds.
\begin{eqnarray}\label{eq_E}
\widetilde{\nabla}_{X}grad^{}\alpha &=&\nabla_{X}grad^{}\alpha+\frac{\|grad^{}\alpha\|^{2}}{2\alpha}X, 
\end{eqnarray}
for each vector field $X$ on $M$.
\end{corollary}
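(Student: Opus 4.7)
The plan is to obtain \eqref{eq_E} by a direct substitution of $Y = grad\,\alpha$ into the Levi-Civita formula \eqref{eq_D} of Theorem \ref{th_1}, and then to simplify the resulting expression using the identities collected in \eqref{eq_B}.

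First, I would plug $Y = grad\,\alpha$ into \eqref{eq_D}, which yields
\begin{eqnarray*}
\widetilde{\nabla}_{X}grad\,\alpha &=& \nabla_{X}grad\,\alpha + \frac{X(\alpha)}{2\alpha}grad\,\alpha + \frac{(grad\,\alpha)(\alpha)}{2\alpha}X - \frac{1}{2\alpha^{2}}g^{\alpha}(X,grad\,\alpha)\,grad\,\alpha.
\end{eqnarray*}
Next, I would rewrite two of the scalar coefficients. By the very definition of the gradient we have $(grad\,\alpha)(\alpha)=g(grad\,\alpha,grad\,\alpha)=\|grad\,\alpha\|^{2}$, which takes care of the third summand on the right-hand side and already produces the term $\frac{\|grad\,\alpha\|^{2}}{2\alpha}X$ that appears in \eqref{eq_E}. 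For the last summand, I would invoke the third line of \eqref{eq_B}, namely $X(\alpha)=\frac{1}{\alpha}g^{\alpha}(X,grad\,\alpha)$, i.e.\ $g^{\alpha}(X,grad\,\alpha)=\alpha\,X(\alpha)$, so that
\begin{eqnarray*}
\frac{1}{2\alpha^{2}}g^{\alpha}(X,grad\,\alpha)\,grad\,\alpha &=& \frac{X(\alpha)}{2\alpha}\,grad\,\alpha.
\end{eqnarray*}

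Finally, I would observe that this last expression is exactly the second summand on the right-hand side, so the two cancel and leave precisely the identity \eqref{eq_E}. There is no real obstacle here: the statement is a one-line corollary whose entire content is a bookkeeping simplification, and the only point requiring a small argument is the conversion $g^{\alpha}(X,grad\,\alpha) = \alpha X(\alpha)$ using \eqref{eq_B}, after which everything collapses.
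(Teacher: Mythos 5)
Your proof is correct and follows exactly the route the paper intends: substitute $Y=grad\,\alpha$ into the connection formula \eqref{eq_D} and simplify with \eqref{eq_B}, using $(grad\,\alpha)(\alpha)=\|grad\,\alpha\|^{2}$ and $g^{\alpha}(X,grad\,\alpha)=\alpha X(\alpha)$ (the latter valid since $(FV)(\alpha)=0$), so the two $grad\,\alpha$-terms cancel. Nothing more is needed.
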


\section{Curvatures of Berger-type metric conformal deformation}\label{Sec3}
Below we will calculate all the forms of the curvature tensor for $(M^{2m}, g^{\alpha})$.
\begin{theorem}\label{th_2}
Given a para-K\"{a}hler-Norden manifold $(M^{2m},F,g)$, then the corresponding Riemannian curvature tensor  $\widetilde{R}$, is expressed by:
\begin{eqnarray}\label{eq_F}
\widetilde{R}(X,Y)Z&=&R(X,Y)Z-\frac{1}{2\alpha^{2}}g^{\alpha}(Y,Z)\nabla_{X}grad^{}\alpha+\frac{1}{2\alpha^{2}}g^{\alpha}(X,Z)\nabla_{Y} grad^{}\alpha\nonumber\\
&&+\left( \frac{3Y(\alpha)Z(\alpha)}{4\alpha^{2}}-\frac{1}{2\alpha}Hess_{\alpha}(Y,Z)-\frac{\|grad^{}\alpha\|^{2}}{4\alpha^{3}}g^{\alpha}(Y,Z)\right) X\nonumber\\
&&-\left( \frac{3X(\alpha)Z(\alpha)}{4\alpha^{2}}-\frac{1}{2\alpha}Hess_{\alpha}(X,Z)-\frac{\|grad^{}\alpha\|^{2}}{4\alpha^{3}}g^{\alpha}(X,Z)\right) Y\nonumber\\
&&+\left(\frac{3X(\alpha)}{4\alpha^{3}}g^{\alpha}(Y,Z)-\frac{3Y(\alpha)}{4\alpha^{3}}g^{\alpha}(X,Z)\right)grad^{}\alpha.
\end{eqnarray}
for each vector fields $X,Y$ and $Z$ on $M$, where $R$ denote the curvature tensor of $(M^{2m},F,g)$.
\end{theorem}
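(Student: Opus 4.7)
My plan is to apply the definition
\[
\widetilde{R}(X,Y)Z \;=\; \widetilde{\nabla}_X \widetilde{\nabla}_Y Z - \widetilde{\nabla}_Y \widetilde{\nabla}_X Z - \widetilde{\nabla}_{[X,Y]}Z ,
\]
substitute Theorem \ref{th_1} everywhere, and then collect terms. Writing $\widetilde{\nabla}_X Y = \nabla_X Y + T(X,Y)$ with
\[
T(X,Y) \;=\; \tfrac{X(\alpha)}{2\alpha}Y + \tfrac{Y(\alpha)}{2\alpha}X - \tfrac{1}{2\alpha^{2}}\, g^{\alpha}(X,Y)\, grad^{}\alpha ,
\]
the computation splits into three structural blocks: the base curvature $R(X,Y)Z$; first-order corrections of the form $(\nabla_X T)(Y,Z) - (\nabla_Y T)(X,Z)$ (after absorbing the $[X,Y]$-piece via the torsion-freeness of $\nabla$); and quadratic corrections $T(X, T(Y,Z)) - T(Y, T(X,Z))$ coming from iterating \eqref{eq_D}.

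\textbf{Key steps.} When expanding $\widetilde{\nabla}_X \widetilde{\nabla}_Y Z$, the only pieces that are not purely formal are the following. First, differentiating the last summand of $T(Y,Z)$ produces $\widetilde{\nabla}_X(grad^{}\alpha)$, which I would replace using Corollary \ref{cor_01} \eqref{eq_E}; this is what generates the $\|grad^{}\alpha\|^{2}/(4\alpha^{3})$ terms in \eqref{eq_F}. Second, the same summand also produces $X(g^{\alpha}(Y,Z))$, which I would unfold by the compatibility identity \eqref{eq_C} into contributions involving $g^{\alpha}(\nabla_X Y, Z)$, $g^{\alpha}(Y,\nabla_X Z)$ and $(X(\alpha)/\alpha)\, g^{\alpha}(Y,Z)$. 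Third, the second-order scalar derivatives $X(Y(\alpha))$, after antisymmetrization and after combining with $(\nabla_X Y)(\alpha)$ and with the Lie-bracket term $[X,Y](\alpha)$ coming from $\widetilde{\nabla}_{[X,Y]}Z$, will package via the torsion-free identity $Hess_{\alpha}(X,Y) = X(Y(\alpha)) - (\nabla_X Y)(\alpha)$ into the Hessian contributions displayed in \eqref{eq_F}. Property \eqref{eq_B} is needed only mildly, to rewrite $g^{\alpha}(X, grad^{}\alpha)/\alpha$ as $X(\alpha)$ when cleaning the final $grad^{}\alpha$-line.

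\textbf{Main obstacle.} The difficulty is organisational rather than conceptual: close to twenty scalar--vector summands arise before any simplification, and the specific numerical coefficients $3/4$, $1/2$, $1/4$ with their varying powers of $\alpha$ in \eqref{eq_F} must emerge from several competing contributions of $(\nabla T)$ and of the quadratic piece $T\circ T$ after the $X\leftrightarrow Y$ antisymmetrization. I would control this by grouping summands according to the vector they are proportional to --- namely $X$, $Y$, $grad^{}\alpha$, $\nabla_X grad^{}\alpha$ and $\nabla_Y grad^{}\alpha$ --- simplifying each group separately, and invoking $Hess_{\alpha}$ only at the very end to package the leftover second derivatives of $\alpha$. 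Notably, the para-K\"{a}hler-Norden structure $F$ plays no explicit role at this stage, since it was already absorbed into the formula \eqref{eq_D} for $\widetilde{\nabla}$.
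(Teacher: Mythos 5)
Your plan is correct and follows essentially the same route as the paper: the paper also proves \eqref{eq_F} by expanding $\widetilde{\nabla}_{X}\widetilde{\nabla}_{Y}Z$, $\widetilde{\nabla}_{Y}\widetilde{\nabla}_{X}Z$ and $\widetilde{\nabla}_{[X,Y]}Z$ via Theorem \ref{th_1} together with \eqref{eq_B} and Corollary \ref{cor_01}, then collects terms. Your only deviation is bookkeeping --- packaging the correction as a symmetric difference tensor $T$ and using the standard comparison formula $\widetilde{R}=R+(\nabla_{X}T)(Y,Z)-(\nabla_{Y}T)(X,Z)+T(X,T(Y,Z))-T(Y,T(X,Z))$ --- which is equivalent to the paper's direct expansion.
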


\begin{proof}The Riemannian curvature tensor $\widetilde{R}$ is characterized by
\begin{eqnarray}\label{eq_G}
\widetilde{R}(X,Y)Z =\widetilde{\nabla}_{X}\widetilde{\nabla}_{Y}Z-\widetilde{\nabla}_{Y}\widetilde{\nabla}_{X}Z-\widetilde{\nabla}_{[X,Y]}Z,
\end{eqnarray}
for each vector fields $X,Y$ and $Z$ on $M$. Using \eqref{eq_D} and \eqref{eq_E}, we obtain:	 	
\begin{eqnarray}\label{eq_H}
\widetilde{\nabla}_{X}\widetilde{\nabla}_{Y}Z&=&\widetilde{\nabla}_{X}\Big(\nabla_{X}Y+\frac{X(\alpha)}{2\alpha}Y+\frac{Y(\alpha)}{2\alpha}X-\frac{1}{2\alpha^{2}}g^{\alpha}(X,Y)grad^{}\alpha\Big)\notag\\
&=& \nabla_{X}\nabla_{Y}Z+\dfrac{X(\alpha)}{2\alpha}\nabla_{Y}Z+\dfrac{Y(\alpha)}{2\alpha}\nabla_{X}Z+\dfrac{Z(\alpha)}{2\alpha}\nabla_{X}Y\notag\\
&&+\big(\dfrac{Y(\alpha)Z(\alpha)}{2\alpha^{2}}+\dfrac{(\nabla_{Y}Z)(\alpha)}{2\alpha}-\frac{\|grad^{}\alpha\|^{2}}{4\alpha^{3}}g^{\alpha}(Y,Z)\big)X\notag\\
&&+\big(\dfrac{XZ(\alpha)}{2\alpha}-\dfrac{X(\alpha)Z(\alpha)}{4\alpha^{2}}\big)Y+\big(\dfrac{XY(\alpha)}{2\alpha}-\dfrac{X(\alpha)Y(\alpha)}{4\alpha^{2}}\big)Z\notag\\
&&-\dfrac{1}{2\alpha}\big(g^{\alpha}(\nabla_{X}Y,Z)+g^{\alpha}(\nabla_{X}Z,Y)+(\nabla_{Y}Z,X)\big)grad^{}\alpha\notag\\
&&+\big(\dfrac{X(\alpha)}{2\alpha^{3}}g^{\alpha}(Y,Z)-\dfrac{Y(\alpha)}{4\alpha^{3}}g^{\alpha}(X,Z)-\dfrac{Z(\alpha)}{4\alpha^{3}}g^{\alpha}(X,Y)\big)grad^{}\alpha\notag\\
&&-\dfrac{1}{2\alpha^{2}}g^{\alpha}(Y,Z)\nabla_{X}grad^{}\alpha.
\end{eqnarray}
Simply, by replacing $X$ with $Y$ in $\widetilde{\nabla}_{X}\widetilde{\nabla}_{Y}Z$, we find: 
\begin{eqnarray}\label{eq_I}
\widetilde{\nabla}_{Y}\widetilde{\nabla}_{X}Z&=& \nabla_{Y}\nabla_{X}Z+\dfrac{Y(\alpha)}{2\alpha}\nabla_{X}Z+\dfrac{X(\alpha)}{2\alpha}\nabla_{Y}Z+\dfrac{Z(\alpha)}{2\alpha}\nabla_{Y}X\notag\\
&&+\big(\dfrac{X(\alpha)Z(\alpha)}{2\alpha^{2}}+\dfrac{(\nabla_{X}Z)(\alpha)}{2\alpha}-\frac{\|grad^{}\alpha\|^{2}}{4\alpha^{3}}g^{\alpha}(X,Z)\big)Y\notag\\
&&+\big(\dfrac{YZ(\alpha)}{2\alpha}-\dfrac{Y(\alpha)Z(\alpha)}{4\alpha^{2}}\big)X+\big(\dfrac{YX(\alpha)}{2\alpha}-\dfrac{Y(\alpha)X(\alpha)}{4\alpha^{2}}\big)Z\notag\\
&&-\dfrac{1}{2\alpha}\big(g^{\alpha}(\nabla_{Y}X,Z)+g^{\alpha}(\nabla_{Y}Z,X)+(\nabla_{X}Z,Y)\big)grad^{}\alpha\notag\\
&&+\big(\dfrac{Y(\alpha)}{2\alpha^{3}}g^{\alpha}(X,Z)-\dfrac{X(\alpha)}{4\alpha^{3}}g^{\alpha}(Y,Z)-\dfrac{Z(\alpha)}{4\alpha^{3}}g^{\alpha}(Y,X)\big)grad^{}\alpha\notag\\
&&-\dfrac{1}{2\alpha^{2}}g^{\alpha}(X,Z)\nabla_{Y}grad^{}\alpha.
\end{eqnarray}
We also find: 
\begin{eqnarray}\label{eq_J}
\widetilde{\nabla}_{[X,Y]}Z&=& \nabla_{[X,Y]}Z+\dfrac{[X,Y](\alpha)}{2\alpha}Z+\dfrac{Z(\alpha)}{2\alpha}[X,Y]\notag\\
&&-\dfrac{1}{2\alpha^{2}}g^{\alpha}([X,Y], Z)grad^{}\alpha.
\end{eqnarray}
Substituting \eqref{eq_H}, \eqref{eq_I} and \eqref{eq_J} into \eqref{eq_G} we find \eqref{eq_F}.
\end{proof}

\begin{corollary}\label{cor_02} 
Given a para-K\"{a}hler-Norden manifold $(M^{2m},F,g)$. If $f$ is Killing potential, then $\widetilde{R}$ is expressed by: 
\begin{eqnarray*}
\widetilde{R}(X,Y)Z &=&R(X,Y)Z+\left( \frac{3Y(\alpha)Z(\alpha)}{4\alpha^{2}}-\frac{\|grad^{}\alpha\|^{2}}{4\alpha^{3}}g^{\alpha}(Y,Z)\right) X\nonumber\\
&&-\left( \frac{3X(\alpha)Z(\alpha)}{4\alpha^{2}}-\frac{\|grad^{}\alpha\|^{2}}{4\alpha^{3}}g^{\alpha}(X,Z)\right) Y\nonumber\\
&&+\left(\frac{3X(\alpha)}{4\alpha^{3}}g^{\alpha}(Y,Z)-\frac{3Y(\alpha)}{4\alpha^{3}}g^{\alpha}(X,Z)\right)  grad^{}\alpha,
\end{eqnarray*}%
for each vector fields $X,Y$ and $Z$ on $M$.
\end{corollary}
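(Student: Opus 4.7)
The statement appears to be a direct specialization of Theorem \ref{th_2} under the hypothesis that the conformal factor (written as $f$ in the statement, but clearly meaning $\alpha$ from context) is a Killing potential. The plan is therefore to substitute the defining conditions of a Killing potential into formula \eqref{eq_F} and observe that several terms drop out.

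First I would recall from the introduction that $\alpha$ being a Killing potential is equivalent to either of the two conditions
\begin{equation*}
Hess_{\alpha}(X,Y)=0 \quad \text{and} \quad \nabla_{X}grad^{}\alpha=0
\end{equation*}
holding for all vector fields $X,Y$ on $M$. I would state these explicitly at the start of the proof so it is clear which terms in \eqref{eq_F} are about to be eliminated.

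Next I would simply write down \eqref{eq_F} and strike out the vanishing contributions: the second and third summands on the right-hand side, which are proportional to $\nabla_{X}grad^{}\alpha$ and $\nabla_{Y}grad^{}\alpha$, disappear; and inside the coefficients of $X$ and $Y$, the terms $-\frac{1}{2\alpha}Hess_{\alpha}(Y,Z)$ and $-\frac{1}{2\alpha}Hess_{\alpha}(X,Z)$ disappear. The remaining expression is exactly the claimed formula for $\widetilde{R}(X,Y)Z$.

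There is really no obstacle here beyond bookkeeping; the entire content of the corollary is that under the Killing-potential hypothesis three of the structural terms in \eqref{eq_F} vanish. The only point worth verifying carefully is the equivalence $Hess_{\alpha}=0 \iff \nabla_{X}grad^{}\alpha = 0$, which follows immediately from the definition $Hess_{\alpha}(X,Y)=g(\nabla_{X}grad^{}\alpha,Y)$ and the non-degeneracy of $g$; this already guarantees the simultaneous vanishing of both types of terms that need to be eliminated.
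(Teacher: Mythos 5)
Your proposal is correct and matches the paper's (implicit) argument: the corollary is obtained exactly by substituting the Killing-potential conditions $Hess_{\alpha}=0$ and $\nabla_{X}grad^{}\alpha=0$ into formula \eqref{eq_F} of Theorem \ref{th_2}, and you rightly note that the hypothesis ``$f$'' in the statement should read $\alpha$. No gap; the equivalence of the two vanishing conditions via non-degeneracy of $g$ is exactly the justification needed.
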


Let $K(X, Y)$ $($ respectively, $\widetilde{K}(X, Y)$$)$ be the sectional curvature of the plane spanned by $\{X,Y\}$ of $(M^{2m},F,g)$ $($respectively, $(M^{2m}, g^{\alpha})$$)$, for each $X$ and $Y$ two vector fields orthonormal with respect to $g$.

\begin{theorem}\label{th_3}
Given a para-K\"{a}hler-Norden manifold $(M^{2m},F,g)$, then $\widetilde{K}(X, Y)$ is expressed by:
\begin{eqnarray}\label{eq_K}
\widetilde{K}(X,Y)&=&-\frac{\|grad^{}\alpha\|^{2}}{4\alpha^{3}}+\dfrac{1}{\alpha(1+g(X,FV)^{2}+g(Y,FV)^{2})}\Big[K(X,Y)\nonumber\\
&&+\left(\dfrac{3Y(\alpha)^{2}}{4\alpha^{2}}-\dfrac{Hess_{\alpha}(Y,Y)}{2\alpha}\right)\left(1+g(X, FV)^{2}\right)\nonumber\\
&&+\left(\dfrac{3X(\alpha)^{2}}{4\alpha^{2}}-\dfrac{Hess_{\alpha}(X,X)}{2\alpha}\right)\left(1+g(Y, FV)^{2}\right)\nonumber\\
&&-\left(\dfrac{3X(\alpha)Y(\alpha)}{2\alpha^{2}}-\dfrac{Hess_{\alpha}(X,Y)}{\alpha}\right)g(X, FV)g(Y, FV)\Big].	
\end{eqnarray}
\end{theorem}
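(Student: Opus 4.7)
The plan is to apply the definition
\[\widetilde{K}(X,Y) = \frac{g^{\alpha}(\widetilde{R}(X,Y)Y,X)}{g^{\alpha}(X,X)g^{\alpha}(Y,Y) - g^{\alpha}(X,Y)^{2}},\]
substitute the expansion of $\widetilde{R}(X,Y)Y$ obtained from Theorem \ref{th_2} by setting $Z=Y$ in \eqref{eq_F}, and collapse each resulting $g^{\alpha}$-inner product back to the underlying data by using the definition of $g^{\alpha}$.

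The denominator is immediate: since $X,Y$ are $g$-orthonormal, a direct expansion gives $g^{\alpha}(X,X)g^{\alpha}(Y,Y) - g^{\alpha}(X,Y)^{2} = \alpha^{2}\bigl(1 + g(X,FV)^{2} + g(Y,FV)^{2}\bigr)$. For the numerator, I would take $g^{\alpha}(\,\cdot\,,X)$ of \eqref{eq_F} term by term, relying on four simplifications afforded by the setup. First, the purity \eqref{curva-pure} and $R(X,Y)V=0$ from \eqref{eq_B} yield $R(X,Y)FV = FR(X,Y)V = 0$; hence $g(R(X,Y)Y,FV) = -g(R(X,Y)FV,Y) = 0$, so $g^{\alpha}(R(X,Y)Y,X) = \alpha K(X,Y)$. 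Second, $Hess_{\alpha}(\,\cdot\,,FV) = 0$ from \eqref{eq_B} kills the $FV$-correction in $g^{\alpha}(\nabla_{X}grad^{}\alpha,X)$ and $g^{\alpha}(\nabla_{Y}grad^{}\alpha,X)$, producing $\alpha Hess_{\alpha}(X,X)$ and $\alpha Hess_{\alpha}(X,Y)$. Third, the constraint \eqref{eq_A} forces $g(grad^{}\alpha,FV) = 0$, so $g^{\alpha}(grad^{}\alpha,X) = \alpha X(\alpha)$. Finally, the $X$- and $Y$-multiples pair trivially with $g^{\alpha}(X,X) = \alpha(1+g(X,FV)^{2})$ and $g^{\alpha}(Y,X) = \alpha\, g(X,FV)g(Y,FV)$.

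The key structural observation to anticipate is that all contributions involving $\|grad^{}\alpha\|^{2}$ assemble into
\[-\frac{\|grad^{}\alpha\|^{2}}{4\alpha^{3}}\bigl(g^{\alpha}(X,X)g^{\alpha}(Y,Y) - g^{\alpha}(X,Y)^{2}\bigr),\]
i.e., exactly the full denominator times $-\|grad^{}\alpha\|^{2}/(4\alpha^{3})$; after division through, this is precisely the isolated leading term of \eqref{eq_K}. The remaining contributions, once divided by $\alpha^{2}(1+g(X,FV)^{2}+g(Y,FV)^{2})$ and regrouped along the factors $(1+g(X,FV)^{2})$, $(1+g(Y,FV)^{2})$, and $g(X,FV)g(Y,FV)$, fall directly into the bracketed expression of the theorem.

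The only real obstacle is bookkeeping: the seven summands of $\widetilde{R}(X,Y)Y$ generate roughly eight distinct scalar combinations ($K(X,Y)$, the three Hessian values $Hess_{\alpha}(X,X), Hess_{\alpha}(Y,Y), Hess_{\alpha}(X,Y)$, the three products $X(\alpha)^{2}, Y(\alpha)^{2}, X(\alpha)Y(\alpha)$, and $\|grad^{}\alpha\|^{2}$) that must be assembled with the correct signs. The single conceptual step is the appeal to purity of $R$; without it, $g^{\alpha}(R(X,Y)Y,X)$ would carry a spurious $\alpha g(R(X,Y)Y,FV)g(X,FV)$ term that does not appear in \eqref{eq_K}.
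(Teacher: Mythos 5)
Your proposal is correct and takes essentially the same route as the paper: both compute $\widetilde{K}$ from the definition \eqref{eq_L}, with the denominator $\alpha^{2}\bigl(1+g(X,FV)^{2}+g(Y,FV)^{2}\bigr)$ and the numerator obtained by pairing \eqref{eq_F} (with $Z=Y$) against $g^{\alpha}(\cdot\,,X)$, using $R(X,Y)FV=0$ (purity plus $R(X,Y)V=0$), $Hess_{\alpha}(\cdot\,,FV)=0$ and $(FV)(\alpha)=0$ to kill the $FV$-corrections. The only cosmetic difference is that the paper first splits the numerator as $\alpha g(\widetilde{R}(X,Y)Y,X)+\alpha g(X,FV)g(\widetilde{R}(X,Y)Y,FV)$ and simplifies each piece separately, which is merely a bookkeeping variant of your term-by-term pairing.
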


\begin{proof} We have:
\begin{equation}\label{eq_L}
\widetilde{K}(X,Y)=\dfrac{g^{\alpha}(\widetilde{R}(X,Y)Y,X)}{g^{\alpha}(X,X)g^{\alpha}(Y,Y)-g^{\alpha}(X,Y)^{2}}.
\end{equation} 
On the one hand, we have: 		
\begin{eqnarray}\label{eq_M}
\qquad\qquad g^{\alpha}(\widetilde{R}(X,Y)Y,X)= \alpha g(\widetilde{R}(X,Y)Y, X)+\alpha g(X,FV)g(\widetilde{R}(X,Y)Y, FV).
\end{eqnarray}
Direct calculation using \eqref{eq_B} and \eqref{eq_F} yields the following results:
\begin{eqnarray}\label{eq_N}
\alpha g(\widetilde{R}(X,Y)Y,X)&=&\alpha g(R(X,Y)Y,X)-\dfrac{1}{2}Hess_{\alpha}(X,X)\left(1+ g(Y,FV)^{2}\right) \nonumber\\
&&+\dfrac{1}{2}Hess_{\alpha}(X,Y)g(X,FV)g(Y,FV)-\dfrac{1}{2}Hess_{\alpha}(Y,Y)\notag\\
&&-\dfrac{3X(\alpha)Y(\alpha)}{4\alpha}g(X,FV)g(Y,FV)+\dfrac{Y(\alpha)^{2}}{4\alpha} \nonumber\\
&&+\dfrac{3X(\alpha)^{2}}{4\alpha}\left( 1+g(Y,FV)^{2}\right)-\frac{\|grad^{}\alpha\|^{2}}{4\alpha}\left( 1+g(Y,FV)^{2}\right),
\end{eqnarray}
\begin{eqnarray}\label{eq_O}
\alpha g(X,FV)g(\widetilde{R}(X,Y)Y, FV)&=&\dfrac{1}{2}Hess_{\alpha}(X,Y)g(X,FV)g(Y,FV)\nonumber\\
&&-\dfrac{1}{2}Hess_{\alpha}(Y,Y)g(X,FV)^{2}+\dfrac{3Y(\alpha)^{2}}{4\alpha}g(X,FV)^{2}\notag\\
&&-\dfrac{3X(\alpha)Y(\alpha)}{4\alpha}g(X,FV)g(Y,FV)\nonumber\\
&&-\frac{\|grad^{}\alpha\|^{2}}{4\alpha}g(X,FV)^{2},
\end{eqnarray}
Substituting \eqref{eq_N} and \eqref{eq_O} into \eqref{eq_M}, we find: 
\begin{eqnarray}\label{eq_P}
g^{\alpha}(\widetilde{R}(X,Y)Y,X)&=&+\alpha g(R(X,Y)Y,X)\nonumber\\
&&-\frac{\|grad^{}\alpha\|^{2}}{4\alpha}(1+g(X,FV)^{2}+g(Y,FV)^{2})\nonumber\\
&&+\left(\dfrac{3Y(\alpha)^{2}}{4\alpha}-\dfrac{Hess_{\alpha}(Y,Y)}{2}\right)\left(1+g(X, FV)^{2}\right)\nonumber\\
&&+\left(\dfrac{3X(\alpha)^{2}}{4\alpha}-\dfrac{Hess_{\alpha}(X,X)}{2}\right)\left(1+g(Y, FV)^{2}\right)\nonumber\\
&&-\left(\dfrac{3X(\alpha)Y(\alpha)}{2\alpha}-Hess_{\alpha}(X,Y)\right)g(X, FV)g(Y, FV).	
\end{eqnarray}
However, we also have:
\begin{eqnarray}\label{eq_Q}
\qquad\qquad g^{\alpha}(X,X)g^{\alpha}(Y,Y)-g^{\alpha}(X,Y)^{2}=\alpha^{2}(1+g(X,FV)^{2}+g(Y,FV)^{2}).
\end{eqnarray}
Finally, substituting \eqref{eq_P} and \eqref{eq_Q} into \eqref{eq_L} we find \eqref{eq_K}.
\end{proof}

\begin{corollary}\label{cor_03}
Given a para-K\"{a}hler-Norden manifold $(M^{2m},F,g)$. If $f$ is Killing
potential, then $\widetilde{K}(X, Y)$ is expressed by:
\begin{eqnarray*}
\widetilde{K}(X,Y)&=&-\frac{\|grad^{}\alpha\|^{2}}{4\alpha^{3}}+\dfrac{1}{\alpha(1+g(X,FV)^{2}+g(Y,FV)^{2})}\Big[K(X,Y)\\
&&+\dfrac{3X(\alpha)^{2}}{4\alpha^{2}}\left(1+g(Y, FV)^{2}\right)+\dfrac{3Y(\alpha)^{2}}{4\alpha^{2}}\left(1+g(X, FV)^{2}\right)\\
&&-\dfrac{3X(\alpha)Y(\alpha)}{2\alpha^{2}}g(X, FV)g(Y, FV)\Big].	
\end{eqnarray*}
\end{corollary}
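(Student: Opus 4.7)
The plan is to obtain Corollary \ref{cor_03} as a direct specialization of Theorem \ref{th_3}. Recall from the introductory discussion that a smooth function is a Killing potential precisely when its Hessian vanishes identically; that is, the hypothesis forces $Hess_{\alpha}(X,X) = Hess_{\alpha}(Y,Y) = Hess_{\alpha}(X,Y) = 0$ for every pair of vector fields. Substituting these three vanishings into the right-hand side of \eqref{eq_K} eliminates exactly the three Hessian terms appearing inside the bracketed expression, while leaving the leading term $-\|grad\,\alpha\|^{2}/(4\alpha^{3})$, the prefactor $1/[\alpha(1+g(X,FV)^{2}+g(Y,FV)^{2})]$, the sectional-curvature contribution $K(X,Y)$, and the three terms quadratic in the $X(\alpha)$, $Y(\alpha)$ factors untouched. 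What remains is exactly the claimed formula.

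As a cross-check, one may instead start from Corollary \ref{cor_02}, where $\widetilde{R}$ has already been simplified under the Killing-potential assumption, and plug the result into the definition $\widetilde{K}(X,Y) = g^{\alpha}(\widetilde{R}(X,Y)Y,X)/[g^{\alpha}(X,X)g^{\alpha}(Y,Y) - g^{\alpha}(X,Y)^{2}]$. Using the pureness identities in \eqref{eq_B}, the unit-parallel property of $V$, the $g$-orthonormality of $\{X,Y\}$, and the denominator computed in \eqref{eq_Q}, the same expression emerges; this is really just repeating the proof of Theorem \ref{th_3} with every Hessian contribution dropped at the outset. Since the argument amounts to setting three quantities to zero in an already-derived formula, there is no genuine obstacle; the only point requiring attention is consistency of notation (the statement writes ``$f$ is Killing potential'', but the smooth function deforming $g$ is called $\alpha$ throughout), and care that the vanishing of $Hess_{\alpha}(X,Y)$ is applied with the correct sign in the mixed term $g(X,FV)g(Y,FV)$.
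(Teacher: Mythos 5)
Your proposal is correct and matches the paper's (implicit) argument: the corollary is simply Theorem \ref{th_3} with every Hessian term dropped, since a Killing potential has $Hess_{\alpha}=0$, and your remark that the hypothesis should read ``$\alpha$ is a Killing potential'' rather than ``$f$'' is an accurate observation about the paper's notation. Nothing further is needed.
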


\begin{remark}\label{rem_1}
If we assume that $\{e_{i}\}_{i=1,\ldots,2m}$ is a local orthonormal frame on $(M^{2m},F,g)$, such that $e_{2m}=FV$, then orthonormal vector fields
\begin{equation}\label{eq_R}
\tilde{e}_{i}= \dfrac{1}{\sqrt{\alpha}}e_{i},\; i=1,\ldots,2m-1,\; \tilde{e}_{2m}=\dfrac{1}{\sqrt{2\alpha}}FV
\end{equation}
is a local orthonormal frame on $(M^{2m}, g^{\alpha})$.	
\end{remark}

\begin{theorem}\label{th_4}
Given a para-K\"{a}hler-Norden manifold $(M^{2m},F,g)$. If $Ricci$  $($ respectively, $\widetilde{Ricci}$$)$  represents the Ricci tensor of $(M^{2m},F,g)$ $($ respectively, $(M^{2m}, g^{\alpha})$$)$, then we obtain:
\begin{eqnarray}\label{eq_S}
\widetilde{Ricci}(X) &=&\dfrac{1}{\alpha}Ricci(X)-\dfrac{m-1}{\alpha^{2}}\nabla_{X}grad^{}\alpha+\dfrac{3(m-1)X(\alpha)}{2\alpha^{3}}grad^{}\alpha\notag\\
&&-\left( \dfrac{(m-2)\|grad^{}\alpha\|^{2}}{2\alpha^{3}}+\dfrac{\Delta(\alpha)}{2\alpha^{2}}\right)X,
\end{eqnarray}
for each vector field $X$ on $M$.
\end{theorem}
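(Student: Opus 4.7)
\medskip

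The plan is to compute $\widetilde{Ricci}(X)$ by tracing $\widetilde{R}$ against the orthonormal frame from Remark \ref{rem_1}. Choose a local $g$-orthonormal frame $\{e_1,\ldots,e_{2m}\}$ with $e_{2m}=FV$, so that $\tilde{e}_i=\alpha^{-1/2}e_i$ for $i<2m$ and $\tilde{e}_{2m}=(2\alpha)^{-1/2}FV$. Bilinearity of $\widetilde R$ in the last two slots turns
\[
\widetilde{Ricci}(X)=\sum_{i=1}^{2m}\widetilde R(X,\tilde e_i)\tilde e_i
\]
into
\[
\widetilde{Ricci}(X)=\frac{1}{\alpha}\sum_{i=1}^{2m}\widetilde R(X,e_i)e_i-\frac{1}{2\alpha}\widetilde R(X,FV)FV,
\]
which neatly separates a ``bulk'' term from an $FV$-correction.

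For the bulk term, substitute $Y=Z=e_i$ into \eqref{eq_F} and sum. The contractions that feed in are the classical ones $\sum_i e_i(\alpha)^2=\|\operatorname{grad}\alpha\|^2$, $\sum_i Hess_\alpha(e_i,e_i)=\Delta(\alpha)$, $\sum_i Hess_\alpha(X,e_i)e_i=\nabla_X\operatorname{grad}\alpha$, $\sum_i e_i(\alpha)e_i=\operatorname{grad}\alpha$, and $\sum_i R(X,e_i)e_i=Ricci(X)$, together with the $g^\alpha$-specific identities $\sum_i g^\alpha(e_i,e_i)=(2m+1)\alpha$ and $\sum_i g^\alpha(X,e_i)\nabla_{e_i}\operatorname{grad}\alpha=\alpha\,\nabla_X\operatorname{grad}\alpha$, where the latter uses $\nabla_{FV}\operatorname{grad}\alpha=0$ (a consequence of $Hess_\alpha(X,FV)=0$ and the symmetry of the Hessian).

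For the $FV$-correction, set $Y=Z=FV$ in \eqref{eq_F} and exploit four clean simplifications: (i) $R(X,FV)FV=F^{2}R(X,V)V=0$, using purity \eqref{curva-pure} together with $R(X,Y)V=0$ from \eqref{eq_B}; (ii) $(FV)(\alpha)=0$; (iii) $Hess_\alpha(X,FV)=Hess_\alpha(FV,FV)=0$; and (iv) $g^\alpha(FV,FV)=2\alpha$ with $g^\alpha(X,FV)=2\alpha\,g(X,FV)$. These collapse $\widetilde R(X,FV)FV$ to a sum of four vector-field terms in $\nabla_X\operatorname{grad}\alpha$, $X$, $g(X,FV)FV$ and $\operatorname{grad}\alpha$.

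Adding the two pieces produces the stated formula \eqref{eq_S}: the $\nabla_X\operatorname{grad}\alpha$ coefficients combine from $-(2m-1)/(2\alpha^{2})+1/(2\alpha^{2})$ to $-(m-1)/\alpha^{2}$, the $\operatorname{grad}\alpha$ coefficients combine analogously to $3(m-1)X(\alpha)/(2\alpha^{3})$, and the coefficient of $X$ regroups into $-(m-2)\|\operatorname{grad}\alpha\|^{2}/(2\alpha^{3})-\Delta(\alpha)/(2\alpha^{2})$. The main obstacle is purely bookkeeping: nine distinct term-types appear in the sum $\sum_i \widetilde R(X,e_i)e_i$, each interacting with the $FV$-anomaly $g^\alpha(FV,\cdot)=2\alpha g(FV,\cdot)$ and with the vanishing identities $(FV)(\alpha)=0$ and $Hess_\alpha(\cdot,FV)=0$ in a slightly different way. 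The most delicate check is that the two $\|\operatorname{grad}\alpha\|^{2}g(X,FV)FV$ contributions (one from the bulk sum via $\sum_i g^\alpha(X,e_i)e_i$, one from the $FV$-correction) cancel exactly, which is both the sharpest consistency test of the computation and the reason why $\widetilde{Ricci}(X)$ does not acquire a residual $FV$-direction term.
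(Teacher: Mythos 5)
Your proposal is correct and follows essentially the same route as the paper: trace $\widetilde R$ from Theorem \ref{th_2} against the adapted frame of Remark \ref{rem_1}, treat the $FV$-direction separately using $(FV)(\alpha)=0$, $Hess_\alpha(\cdot,FV)=0$, $\nabla_{FV}grad^{}\alpha=0$ and $R(X,FV)FV=0$, and combine; your writing it as the full sum $\frac{1}{\alpha}\sum_{i=1}^{2m}\widetilde R(X,e_i)e_i-\frac{1}{2\alpha}\widetilde R(X,FV)FV$ rather than the paper's sum over $i\le 2m-1$ plus $\frac{1}{2\alpha}\widetilde R(X,FV)FV$ is only a bookkeeping variant, and your coefficient combinations (including the cancellation of the $g(X,FV)FV$ terms) check out against \eqref{eq_S}.
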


\begin{proof} Given a local orthonormal frame $\{\tilde{e}_{i}\}_{i=1,\ldots,2m}$ on $(M^{2m}, g^{\alpha})$  defined by \eqref{eq_R}. We have
\begin{eqnarray}\label{eq_T}
\widetilde{Ricci}(X)&=&\sum_{i=1}^{2m}\widetilde{R}(X,\tilde{e}_{i})\tilde{e}_{i}\notag\\
&=& \dfrac{1}{\alpha}\sum_{i=1}^{2m-1}\widetilde{R}(X, e_{i})e_{i}+\dfrac{1}{2\alpha}\widetilde{R}(X, FV)FV.
\end{eqnarray}
From \eqref{curva-pure}, \eqref{eq_A}, \eqref{eq_B} and \eqref{eq_F} with direct computation we get:
\begin{eqnarray}\label{eq_U}
\frac{1}{\alpha}\sum_{i=1}^{2m}\widetilde{R}(X,e_{i})e_{i} &=&-\frac{2m-3}{2\alpha^{2}}\nabla_{X}grad^{}\alpha-\frac{1}{2\alpha^{2}}g(X,FV)\nabla_{FV}grad^{}\alpha\notag\\
&&+\frac{(6m-9)X(\alpha)}{4\alpha^{3}} grad^{}\alpha-\frac{\|grad^{}\alpha\|^{2}}{4\alpha^{3}}g(X,FV)FV\notag\\ 
&&-\left(\frac{\Delta(\alpha)}{2\alpha^{2}}+\frac{(2m-5)\|grad^{}\alpha\|^{2}}{4\alpha^{3}}\right)X+\frac{1}{\alpha} Ricc(X),
\end{eqnarray}
and
\begin{eqnarray}\label{eq_V}
\dfrac{1}{2\alpha}\widetilde{R}(X, FV)FV&=&-\frac{1}{2\alpha^{2}}\nabla_{X}grad^{}\alpha+\frac{1}{2\alpha^{2}}g(X,FV)\nabla_{FV} grad^{}\alpha\notag\\ 
&&-\frac{\|grad^{}\alpha\|^{2}}{4\alpha^{3}}X+\frac{3X(\alpha)}{4\alpha^{3}}grad^{}\alpha+\frac{\|grad^{}\alpha\|^{2}}{4\alpha^{3}}g(X,FV)FV.
\end{eqnarray}
Substituting \eqref{eq_U} and \eqref{eq_V} into \eqref{eq_T}, we find \eqref{eq_S}.
\end{proof}

\begin{corollary}\label{cor_04}
Given a para-K\"{a}hler-Norden manifold $(M^{2m},F,g)$. If $f$ is Killing potential, then we have:
\begin{eqnarray*}
\widetilde{Ricci}(X)&=&\dfrac{1}{\alpha}Ricci(X)+\dfrac{3(m-1)X(\alpha)}{2\alpha^{3}}grad^{}\alpha-\dfrac{(m-2)\|grad^{}\alpha\|^{2}}{2\alpha^{3}}X,	
\end{eqnarray*}
for each vector field $X$ on $M$.
\end{corollary}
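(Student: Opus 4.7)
The plan is to obtain this corollary as a direct specialization of Theorem \ref{th_4}, with no new computation beyond a substitution. I would first recall from the introduction that a smooth function is a Killing potential exactly when its Hessian vanishes identically, or equivalently when $\nabla_{X}grad^{}\alpha = 0$ for every vector field $X$ on $M$. Plugging this into \eqref{eq_S} kills the second term on the right-hand side, namely $-\dfrac{m-1}{\alpha^{2}}\nabla_{X}grad^{}\alpha$.

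Next, since $\Delta(\alpha) = Tr_{g}(Hess_{\alpha}) = \sum_{i=1}^{2m} g(\nabla_{e_{i}}grad^{}\alpha, e_{i})$, the Killing potential hypothesis forces $\Delta(\alpha) = 0$, which removes the summand $\dfrac{\Delta(\alpha)}{2\alpha^{2}}X$ from the bracketed coefficient of $X$ in \eqref{eq_S}. After discarding these two vanishing contributions, the three remaining terms are exactly the three terms in the claimed formula, so the corollary follows immediately.

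There is no genuine obstacle in this argument; the whole proof is a one-line substitution into \eqref{eq_S}. The only minor point worth flagging is an apparent typo in the hypothesis, where the statement reads ``$f$ is Killing potential'' although the smooth function appearing in the definition of $g^{\alpha}$ is named $\alpha$; I would simply read the hypothesis as ``$\alpha$ is Killing potential'' so that the vanishing of $Hess_{\alpha}$ is available.
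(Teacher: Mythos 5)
Your proposal is correct and matches the intended derivation: the paper treats this corollary as an immediate specialization of Theorem \ref{th_4}, where the Killing potential hypothesis gives $Hess_{\alpha}=0$, hence $\nabla_{X}grad^{}\alpha=0$ and $\Delta(\alpha)=Tr_{g}(Hess_{\alpha})=0$, which removes exactly the two terms you identify in \eqref{eq_S}. Your remark that the hypothesis ``$f$ is Killing potential'' should read ``$\alpha$ is Killing potential'' is also right; this is a typo repeated in several corollaries of the paper.
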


\begin{theorem}\label{th_5}
Given a para-K\"{a}hler-Norden manifold $(M^{2m},F,g)$. Suppose $Ric$ $($respectively, $\widetilde{Ric}$$)$ represents the Ricci curvature of $(M^{2m},F,g)$ $($respectively, $(M^{2m}, g^{\alpha})$$)$, then we obtain:
\begin{eqnarray}\label{eq_W}
\widetilde{Ric}(X,Y) &=&Ric(X,Y)-\dfrac{m-1}{\alpha}Hess_{\alpha}(X,Y)+\dfrac{3(m-1)}{2\alpha^{2}}X(\alpha)Y(\alpha)\notag\\
&&-\left( \dfrac{(m-2)\|grad^{}\alpha\|^{2}}{2\alpha^{3}}+\dfrac{\Delta(\alpha)}{2\alpha^{2}}\right)g^{\alpha}(X,Y),
\end{eqnarray}
for each vector fields $X$ and $Y$ on $M$.
\end{theorem}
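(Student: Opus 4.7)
The plan is to derive the Ricci curvature formula from the Ricci operator expression already established in Theorem \ref{th_4}, via the standard identity $\widetilde{Ric}(X,Y) = g^{\alpha}(\widetilde{Ricci}(X),Y)$. Starting from \eqref{eq_S}, I would pair each of the four terms on its right-hand side with $Y$ through $g^{\alpha}$ and then simplify using the algebraic identities collected in \eqref{eq_B}.

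Three of these four terms are almost immediate. For the term involving $\nabla_{X}grad^{}\alpha$, expanding the definition of $g^{\alpha}$ produces $\alpha\,Hess_{\alpha}(X,Y) + \alpha\,Hess_{\alpha}(X,FV)\,g(Y,FV)$, and the identity $Hess_{\alpha}(X,FV)=0$ from \eqref{eq_B} kills the $FV$ piece, delivering the desired $-\frac{m-1}{\alpha}Hess_{\alpha}(X,Y)$ contribution. For the term containing $grad^{}\alpha$, the third line of \eqref{eq_B} rewrites $g^{\alpha}(grad^{}\alpha,Y) = \alpha\,Y(\alpha)$, yielding the $\frac{3(m-1)}{2\alpha^{2}}X(\alpha)Y(\alpha)$ piece. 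The remaining scalar multiple of $X$ simply absorbs into a factor of $g^{\alpha}(X,Y)$.

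The one genuinely delicate point, and where I expect the main (if modest) obstacle, concerns the first term $\frac{1}{\alpha}Ricci(X)$. Pairing this with $Y$ via $g^{\alpha}$ expands to $Ric(X,Y) + Ric(X,FV)\,g(Y,FV)$, so I must verify that $Ric(X,FV)=0$ in order to eliminate the spurious $g(Y,FV)$ piece. The argument combines the parallelism of $V$ (encoded in $R(\cdot,\cdot)V=0$ from \eqref{eq_B}) with the purity of the curvature \eqref{curva-pure}: for any frame vector $e_{i}$ one has $R(e_{i},X)FV = F\,R(e_{i},X)V = 0$, so the contraction $Ric(X,FV) = \sum_{i}g(R(e_{i},X)FV,e_{i})$ vanishes identically.

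Assembling the four simplified contributions then reproduces \eqref{eq_W} directly. The rest is purely bookkeeping based on Theorem \ref{th_4} and the identities in \eqref{eq_B}, with no further subtleties anticipated.
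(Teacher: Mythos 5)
Your proposal is correct and follows essentially the same route as the paper: both compute $\widetilde{Ric}(X,Y)=g^{\alpha}(\widetilde{Ricci}(X),Y)$ from Theorem \ref{th_4} and simplify with the identities in \eqref{eq_B}. Your explicit verification that $Ric(X,FV)=0$ (via $R(\cdot,\cdot)V=0$ and the purity \eqref{curva-pure}) is a point the paper uses only implicitly when passing to \eqref{eq_Z}, so spelling it out is a welcome but not substantively different addition.
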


\begin{proof} Given a local orthonormal frame $\{\tilde{e}_{i}\}_{i=1,\ldots,2m}$ on $(M^{2m}, g^{\alpha})$  defined by \eqref{eq_R}. We have
\begin{eqnarray}\label{eq_X}
\widetilde{Ric}(X,Y)&=&g^{\alpha}(\widetilde{Ricci}(X),Y)\notag\\
&=&\alpha g(\widetilde{Ricci}(X),Y)+\alpha g(Y,FV)g(\widetilde{Ricci}(X),FV).
\end{eqnarray}
Direct calculation using \eqref{eq_S} yields the following:
\begin{eqnarray}\label{eq_Y}
\alpha g(\widetilde{Ricci}(X),Y)&=&Ric(X,Y)-\dfrac{m-1}{\alpha}Hess_{\alpha}(X,Y)+\dfrac{3(m-1)}{2\alpha^{2}}X(\alpha)Y(\alpha)\notag\\
&&-\left( \dfrac{(m-2)\|grad^{}\alpha\|^{2}}{2\alpha^{2}}+\dfrac{\Delta(\alpha)}{2\alpha}\right)g(X,Y),
\end{eqnarray}
and
\begin{eqnarray}\label{eq_Z}
\alpha g(Y,FV)g(\widetilde{Ricci}(X), FV)=-\left( \dfrac{(m-2)\|grad^{}\alpha\|^{2}}{2\alpha^{2}}+\dfrac{\Delta(\alpha)}{2\alpha}\right)g(X,FV)g(Y,FV).\notag\\
\end{eqnarray}
Substituting \eqref{eq_Y} and \eqref{eq_Z} into \eqref{eq_X}, we find \eqref{eq_W}.
\end{proof}

\begin{corollary}\label{cor_05}
Given a para-K\"{a}hler-Norden manifold $(M^{2m},F,g)$. If $f$ is Killing potential, then we have:
\begin{eqnarray*}
\widetilde{Ric}(X,Y)=Ric(X,Y)+\dfrac{3(m-1)}{2\alpha^{2}}X(\alpha)Y(\alpha)-\dfrac{(m-2)\|grad^{}\alpha\|^{2}}{2\alpha^{3}}g^{\alpha}(X,Y),
\end{eqnarray*}
for each vector fields $X$ and $Y$ on $M$.
\end{corollary}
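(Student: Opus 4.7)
The plan is to derive this corollary as an immediate specialization of Theorem \ref{th_5}, reading the hypothesis ``$f$ is Killing potential'' as ``$\alpha$ is Killing potential'' (since the function governing the deformation in the present setting is $\alpha$, and no $f$ has been introduced in this section).

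First I would recall from the discussion following the definition of a Killing potential in the Introduction that $\alpha$ being a Killing potential is equivalent to $Hess_{\alpha}(X,Y)=0$ for all vector fields $X,Y$ on $M$, or equivalently $\nabla_{X}grad^{}\alpha=0$. Taking the trace of the first identity with respect to $g$ in a local $g$-orthonormal frame $\{e_{i}\}_{i=1,\ldots,2m}$ gives
\begin{equation*}
\Delta(\alpha)=\sum_{i=1}^{2m}Hess_{\alpha}(e_{i},e_{i})=0,
\end{equation*}
so both $Hess_{\alpha}$ and $\Delta(\alpha)$ vanish identically on $M$.

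Next I would substitute these two vanishing conditions directly into formula \eqref{eq_W} of Theorem \ref{th_5}. The term $-\frac{m-1}{\alpha}Hess_{\alpha}(X,Y)$ disappears, and inside the coefficient of $g^{\alpha}(X,Y)$ the summand $\frac{\Delta(\alpha)}{2\alpha^{2}}$ also vanishes, leaving only $\frac{(m-2)\|grad^{}\alpha\|^{2}}{2\alpha^{3}}$. The remaining two terms $Ric(X,Y)$ and $\frac{3(m-1)}{2\alpha^{2}}X(\alpha)Y(\alpha)$ pass through unchanged, yielding exactly the asserted expression for $\widetilde{Ric}(X,Y)$.

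There is no real obstacle here; the only conceptual point worth highlighting is that a single hypothesis (the Killing potential condition) simultaneously kills two of the terms appearing in \eqref{eq_W}, via the trace computation above. One could alternatively give a parallel proof by first specializing Theorem \ref{th_4} (yielding Corollary \ref{cor_04}) and then applying the definition $\widetilde{Ric}(X,Y)=g^{\alpha}(\widetilde{Ricci}(X),Y)$ together with \eqref{eq_B}, but substitution into \eqref{eq_W} is the most direct route.
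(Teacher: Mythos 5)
Your proposal is correct and coincides with the paper's own (implicit) argument: reading the hypothesis as ``$\alpha$ is a Killing potential'' gives $Hess_{\alpha}=0$, hence $\Delta(\alpha)=Tr_{g}(Hess_{\alpha})=0$, and substituting these into \eqref{eq_W} of Theorem \ref{th_5} yields the stated formula directly.
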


\begin{theorem}\label{th_6}
Given a para-K\"{a}hler-Norden manifold $(M^{2m},F,g)$. Suppose $\sigma$ $($respectively, $\widetilde{\sigma}$$)$ denote the scalar curvature of $(M^{2m},F,g)$ $($respectively, $(M^{2m}, g^{\alpha})$$)$, then we obtain:
\begin{eqnarray*}
\widetilde{\sigma}=\dfrac{1}{\alpha}\sigma-\dfrac{2m-1}{\alpha^{2}}\Delta(\alpha)-\dfrac{(2m-1)(m-3)}{2\alpha^{3}}\|grad^{}\alpha\|^{2}.
\end{eqnarray*}
\end{theorem}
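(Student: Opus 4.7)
The plan is to compute $\widetilde{\sigma}$ directly as the $g^{\alpha}$-trace of $\widetilde{Ric}$, by adopting the local $g^{\alpha}$-orthonormal frame $\{\tilde{e}_{i}\}_{i=1,\ldots,2m}$ from Remark~\ref{rem_1} and inserting the formula \eqref{eq_W} for $\widetilde{Ric}$ from Theorem~\ref{th_5}. Because the last frame vector $\tilde{e}_{2m}=FV/\sqrt{2\alpha}$ carries a different normalising factor than $\tilde{e}_{i}=e_{i}/\sqrt{\alpha}$ for $i\leq 2m-1$, I would split
\[
\widetilde{\sigma}=\sum_{i=1}^{2m}\widetilde{Ric}(\tilde{e}_{i},\tilde{e}_{i})=\frac{1}{\alpha}\sum_{i=1}^{2m-1}\widetilde{Ric}(e_{i},e_{i})+\frac{1}{2\alpha}\widetilde{Ric}(FV,FV),
\]
and treat the two pieces separately.

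For the isolated $FV$-term, I would exploit the vanishing identities already available: $(FV)(\alpha)=0$ and $Hess_{\alpha}(FV,FV)=0$ come from \eqref{eq_A}--\eqref{eq_B}, while $g^{\alpha}(FV,FV)=2\alpha$ is immediate. The remaining ingredient is $Ric(FV,FV)=0$, which I would extract from the purity relation \eqref{curva-pure} combined with $R(X,Y)V=0$: pulling $F$ out of $R(FV,e_{i})e_{i}$, transferring it onto the second slot via $g(F\cdot,\cdot)=g(\cdot,F\cdot)$, and using $F^{2}=I$ gives $Ric(FV,FV)=Ric(V,V)$, which in turn vanishes since $R(e_{i},V)V=0$. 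Substituting into \eqref{eq_W} then collapses $\widetilde{Ric}(FV,FV)$ to $-\tfrac{(m-2)\|grad\,\alpha\|^{2}}{\alpha^{2}}-\tfrac{\Delta(\alpha)}{\alpha}$.

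For the partial sum over $i\leq 2m-1$, I note that $g^{\alpha}(e_{i},e_{i})=\alpha$ since $e_{i}\perp FV$, so \eqref{eq_W} produces four types of contributions. The key observation is that the three partial traces $\sum_{i=1}^{2m-1}Ric(e_{i},e_{i})$, $\sum_{i=1}^{2m-1}Hess_{\alpha}(e_{i},e_{i})$ and $\sum_{i=1}^{2m-1}e_{i}(\alpha)^{2}$ each coincide with the corresponding full $g$-traces $\sigma$, $\Delta(\alpha)$ and $\|grad\,\alpha\|^{2}$, precisely because the missing $i=2m$ terms $Ric(FV,FV)$, $Hess_{\alpha}(FV,FV)$ and $(FV(\alpha))^{2}$ all vanish by the previous step. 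Combining the two pieces and collecting the coefficients of $\sigma/\alpha$, $\Delta(\alpha)/\alpha^{2}$ and $\|grad\,\alpha\|^{2}/\alpha^{3}$ yields the claim; the decisive algebraic check is the identity $3(m-1)-(2m-1)(m-2)-(m-2)=-(2m-1)(m-3)$, which produces the last coefficient. The main obstacle is purely bookkeeping: once $Ric(FV,FV)=0$ is recognised, the rest is a trace computation, but careful attention to the two different normalising factors $1/\alpha$ versus $1/(2\alpha)$ is essential to avoid mistracking the scalar coefficients.
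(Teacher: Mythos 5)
Your proposal is correct and follows essentially the same route as the paper's proof: the trace of \eqref{eq_W} in the frame \eqref{eq_R}, split into the $\tfrac{1}{\alpha}\sum_{i\le 2m-1}$ part and the $\tfrac{1}{2\alpha}\widetilde{Ric}(FV,FV)$ part, with the same coefficient bookkeeping. The only difference is that you explicitly justify $Ric(FV,FV)=0$ via purity and $R(X,Y)V=0$, a fact the paper uses silently when it replaces the partial Ricci trace by $\sigma$, so your write-up is if anything slightly more complete.
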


\begin{proof} Given a local orthonormal frame $\{\tilde{e}_{i}\}_{i=1,\ldots,2m}$ on $(M^{2m}, g^{\alpha})$  defined by \eqref{eq_R}. We have:
\begin{eqnarray*}		
\widetilde{\sigma}&=&\sum_{i=1}^{2m}\widetilde{Ric}(\tilde{e}_{i},\tilde{e}_{i})\notag\\
&=&\dfrac{1}{\alpha}\sum_{i=1}^{2m-1}\widetilde{Ric}(e_{i},e_{i})+\dfrac{1}{2\alpha}\widetilde{Ric}(FV,FV).
\end{eqnarray*}
Direct calculation using \eqref{eq_W} yields the following:
\begin{eqnarray*}
\widetilde{\sigma}&=&\dfrac{1}{\alpha}\sum_{i=1}^{2m-1}\Big(Ric(e_{i},e_{i})-\dfrac{(m-1)Hess_{\alpha}(e_{i},e_{i})}{\alpha}+\dfrac{3(m-1)e_{i}(\alpha)^{2}}{2\alpha^{2}}\notag\\
&&-\big(\dfrac{(m-2)\|grad^{}\alpha\|^{2}}{2\alpha^{3}}+\dfrac{\Delta(\alpha)}{2\alpha^{2}}\big)g^{\alpha}(e_{i},e_{i})\Big)\\
&&+\dfrac{1}{2\alpha}\Big(Ric(FV,FV)-\dfrac{(m-1)Hess_{\alpha}(FV,FV)}{\alpha}+\dfrac{3(m-1)(FV(\alpha))^{2}}{2\alpha^{2}}\notag\\
&&-\big(\dfrac{(m-2)\|grad^{}\alpha\|^{2}}{2\alpha^{3}}+\dfrac{\Delta(\alpha)}{2\alpha^{2}}\big)g^{\alpha}(FV,FV)\Big)\\
&=&\dfrac{1}{\alpha}\sigma-\dfrac{(m-1)\Delta(\alpha)}{\alpha^{2}}+\dfrac{3(m-1)\|grad^{}\alpha\|^{2}}{2\alpha^{3}}-\dfrac{(2m-1)\Delta(\alpha)}{2\alpha^{2}}\\
&&-\dfrac{(2m-1)(m-2)\|grad^{}\alpha\|^{2}}{2\alpha^{3}}-\dfrac{(m-2)\|grad^{}\alpha\|^{2}}{2\alpha^{3}}-\dfrac{\Delta(\alpha)}{2\alpha^{2}}\\
&=&\dfrac{1}{\alpha}\sigma-\dfrac{2m-1}{\alpha^{2}}\Delta(\alpha)-\dfrac{(2m-1)(m-3)}{2\alpha^{3}}\|grad^{}\alpha\|^{2}.
\end{eqnarray*}
\end{proof}

\begin{corollary}\label{cor_06}
Given a para-K\"{a}hler-Norden manifold $(M^{2m},F,g)$. If $\alpha$ is Killing potential, then we have:
\begin{eqnarray*}
\widetilde{\sigma}=\dfrac{1}{\alpha}\sigma-\dfrac{(2m-1)(m-3)}{2\alpha^{3}}\|grad^{}\alpha\|^{2}.	
\end{eqnarray*}
\end{corollary}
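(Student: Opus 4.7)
The plan is to derive Corollary \ref{cor_06} as a direct specialization of Theorem \ref{th_6} under the Killing potential hypothesis. The key observation, recalled from the introduction, is that $\alpha$ being a Killing potential is equivalent to $Hess_{\alpha} = 0$, which in turn means $\nabla_{X}grad^{}\alpha = 0$ for every vector field $X$ on $M$.

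First I would note that since $Hess_{\alpha} \equiv 0$, its $g$-trace vanishes identically, so
\begin{equation*}
\Delta(\alpha) = Tr_{g}(Hess_{\alpha}) = \sum_{i=1}^{2m} g(\nabla_{e_{i}}grad^{}\alpha, e_{i}) = 0.
\end{equation*}
Next, I would substitute $\Delta(\alpha) = 0$ directly into the formula
\begin{equation*}
\widetilde{\sigma} = \dfrac{1}{\alpha}\sigma - \dfrac{2m-1}{\alpha^{2}}\Delta(\alpha) - \dfrac{(2m-1)(m-3)}{2\alpha^{3}}\|grad^{}\alpha\|^{2}
\end{equation*}
established in Theorem \ref{th_6}. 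The middle term drops out, leaving exactly the stated expression.

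There is no real obstacle here: the corollary is purely a substitution, and the only input beyond Theorem \ref{th_6} is the definitional equivalence (Killing potential $\iff$ $Hess_{\alpha}=0$) recorded in the introduction, from which the vanishing of $\Delta(\alpha)$ is immediate by taking the trace. Hence the proof reduces to one line of computation after citing Theorem \ref{th_6}.
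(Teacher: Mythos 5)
Your proposal is correct and matches the paper's (implicit) derivation: the corollary is simply Theorem \ref{th_6} specialized via the equivalence of the Killing potential condition with $Hess_{\alpha}=0$, which forces $\Delta(\alpha)=Tr_{g}(Hess_{\alpha})=0$ and kills the middle term. Nothing further is needed.
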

Below, we study the harmonicity and biharmonicity concerning the Berger-type metric conformal deformation.
Given a smooth map $\phi:(M^{m},g) \rightarrow (N^{n},h)$ between Riemannian manifolds. 
The map $\phi$ is said to be harmonic if it is a critical point of the energy functional
\begin{equation*}
E(\phi)=\dfrac{1}{2}\int_{D}Tr_{g}|d\phi|^{2}v_{g},
\end{equation*}
for any compact domain $D$ of $M$, where $v_{g}$ is the volume element of $M$. Or equivalently the map $F$ is harmonic if and only if it satisfies the associated Euler-Lagrange equations to the energy functional $E(\phi)$ given by 
\begin{equation*}
\tau(\phi)=Tr_{g}\nabla d\phi=0,
\end{equation*}
where $\tau(\phi)$ is the tension field of $\phi$. For more details see \cite{E.L,E.S,Kon}.

The biharmonic maps, which are a direct generalization of harmonic maps, are described as critical points of a bienergy functional.
\begin{equation*}
E_{2}(\phi)=\frac{1}{2} \int_{D}|\tau(\phi)|^{2}v_{g}.
\end{equation*}
Or equivalently the map $\phi$ is biharmonic if and only if it satisfies the Euler-Lagrange equation associated with the bienergy is given by the vanishing of the bitension field, 
\begin{eqnarray*}
\tau_{2}(\phi)=\Delta^{\phi}\tau(\phi)-Tr_{g}R^{N}(\tau(\phi),d\phi)d\phi.
\end{eqnarray*}
Here $\Delta^{\phi}\tau(\phi):=-Tr_{g}(\nabla^{\phi}_{\ast}\nabla^{\phi}_{\ast}-\nabla^{\phi}_{\nabla_{\ast}\ast})\tau(\phi)$ denotes the rough Laplacian of $\tau(\phi)$ on the pull-back bundle $\phi^{-1}TN$ and $R^{N}$ denotes the curvature tensor of the target manifold $N$. It is clear that every harmonic map is biharmonic. Therefore, it's interesting to construct non-harmonic biharmonic maps, also known as proper biharmonic maps. They have been studied and published in many papers see \cite{B.K,Bal,B.F.S}. The study of harmonic and biharmonic maps has grown to be one of the most important areas of differential geometry research todaysee \cite{B.W,B.Ch1,C.L.W,D.Z2,D.Z4,D.L.Z,Z.D3,Z.G}.

\section{Harmonicity concerning the Berger-type metric conformal deformation}\label{Sec4}
\subsection{The harmonicity of the identity map}\;\\
Let $(M^{2m},F,g)$ be a para-K\"{a}hler-Norden manifold, $\alpha : M\rightarrow]0,+\infty[$ be a strictly positive smooth function and $g^{\alpha}$ the Berger-type metric conformal deformation of $g$.

\begin{theorem}\label{th_7} Given the identity map $I:(M^{2m},F,g)\rightarrow (M^{2m}, g^{\alpha})$. Then the tension field $\widetilde{\tau}(I)$ of $I$ is expressed by:
\begin{eqnarray*}
\widetilde{\tau}(I)=\dfrac{1-2m}{2\alpha}grad^{}\alpha.
\end{eqnarray*}
\end{theorem}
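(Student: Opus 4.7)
The plan is to compute $\widetilde{\tau}(I)$ directly from the definition by reading off the second fundamental form of the identity map from Theorem \ref{th_1} and tracing it against the source metric $g$.

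First, I would record that since $dI$ is the identity on tangent spaces, the Hessian of $I$ is the difference of the two Levi-Civita connections. Using \eqref{eq_D}, for any vector fields $X,Y$ on $M$,
\begin{equation*}
\nabla dI(X,Y) \;=\; \widetilde{\nabla}_{X}Y-\nabla_{X}Y \;=\; \frac{X(\alpha)}{2\alpha}Y+\frac{Y(\alpha)}{2\alpha}X-\frac{1}{2\alpha^{2}}g^{\alpha}(X,Y)\,grad^{}\alpha .
\end{equation*}

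Next, I would choose a local frame $\{e_{i}\}_{i=1,\ldots,2m}$ that is \emph{$g$-orthonormal} (not the rescaled $g^{\alpha}$-orthonormal frame of Remark \ref{rem_1}) and take it so that $e_{2m}=FV$; this is consistent with \eqref{eq_B} since $g(FV,FV)=1$. The tension field is then
\begin{equation*}
\widetilde{\tau}(I)=\sum_{i=1}^{2m}\nabla dI(e_{i},e_{i})=\sum_{i=1}^{2m}\frac{e_{i}(\alpha)}{\alpha}\,e_{i}-\frac{1}{2\alpha^{2}}\Bigl(\sum_{i=1}^{2m}g^{\alpha}(e_{i},e_{i})\Bigr)grad^{}\alpha .
\end{equation*}

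The two algebraic identities I would then invoke are the standard expansion $\sum_{i}e_{i}(\alpha)\,e_{i}=grad^{}\alpha$ and Parseval's identity $\sum_{i}g(e_{i},FV)^{2}=g(FV,FV)=1$, which together with the definition of $g^{\alpha}$ yield
\begin{equation*}
\sum_{i=1}^{2m}g^{\alpha}(e_{i},e_{i}) = \alpha\sum_{i=1}^{2m}\bigl(1+g(e_{i},FV)^{2}\bigr)=(2m+1)\alpha .
\end{equation*}
Substituting these back gives $\widetilde{\tau}(I)=\tfrac{1}{\alpha}grad^{}\alpha-\tfrac{2m+1}{2\alpha}grad^{}\alpha=\tfrac{1-2m}{2\alpha}grad^{}\alpha$, as claimed.

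The calculation is essentially routine, so there is no serious obstacle; the only point that requires care is to trace against the source metric $g$ and therefore use a $g$-orthonormal frame rather than the $g^{\alpha}$-orthonormal frame $\{\widetilde e_{i}\}$ of Remark \ref{rem_1}. Tracing with the wrong frame would produce a spurious overall factor of $\alpha$, and it would also obscure the fact that the $FV$-direction contributes $\alpha(1+g(FV,FV)^{2})=2\alpha$ rather than $\alpha$ to $\sum_{i}g^{\alpha}(e_{i},e_{i})$, which is precisely where the ``$+1$'' in $2m+1$ comes from.
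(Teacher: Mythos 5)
Your proposal is correct and follows essentially the same route as the paper: both compute $\widetilde{\nabla}_{X}Y-\nabla_{X}Y$ from Theorem \ref{th_1} and trace it over a $g$-orthonormal frame containing $FV$, with $\sum_{i}g^{\alpha}(e_{i},e_{i})=(2m+1)\alpha$ giving the coefficient $\tfrac{1-2m}{2\alpha}$. Your use of the Parseval identity $\sum_{i}g(e_{i},FV)^{2}=1$ is just a frame-independent way of stating the paper's explicit split into the directions $e_{1},\ldots,e_{2m-1}$ and $e_{2m}=FV$, and your remark about tracing with respect to the source metric $g$ is exactly what the paper does.
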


\begin{proof} Given a local orthonormal frame $\{e_{i}\}_{i=1,\ldots,2m}$ on $M$, with $e_{2m}=FV$. We calculate the tension field $\widetilde{\tau}(I)$ of the Identity map $I$.
\begin{eqnarray*}
\widetilde{\tau}(I)&=&\sum_{i=1}^{2m}\left( \widetilde{\nabla}^{I}_{e_{i}}dI(e_{i})-dI(\nabla_{e_{i}}e_{i})\right)  \\
&=&\sum_{i=1}^{2m}\left( \widetilde{\nabla}_{dI(e_{i})}dI(e_{i})-\nabla_{e_{i}}e_{i}\right)  \\
&=&\sum_{i=1}^{2m}\left( \widetilde{\nabla}_{e_{i}}e_{i}-\nabla_{e_{i}}e_{i}\right).
\end{eqnarray*}
Using Theorem \ref{th_1}, we find:
\begin{eqnarray*}
\widetilde{\tau}(I)&=&\sum_{i=1}^{2m}\left( \nabla_{e_{i}}e_{i}+\frac{e_{i}(\alpha)}{\alpha}e_{i}-\frac{1}{2\alpha^{2}}g^{\alpha}(e_{i},e_{i})grad^{}\alpha-\nabla_{e_{i}}e_{i}\right) \\
&=&\sum_{i=1}^{2m}\frac{e_{i}(\alpha)}{\alpha}e_{i}-\frac{1}{2\alpha^{2}}\left( \sum_{i=1}^{2m-1}g^{\alpha}(e_{i},e_{i})+g^{\alpha}(FV,FV)\right) grad^{}\alpha\\
&=&\dfrac{1-2m}{2\alpha}grad^{}\alpha.
\end{eqnarray*}	
\end{proof}

We note that the identity map $I:(M^{2m},F,g)\rightarrow (M^{2m}, g^{\alpha})$  is harmonic if and only if \; $\alpha$is constant.

\begin{theorem}\label{th_8} Given the identity map $I:(M^{2m}, g^{\alpha})\rightarrow (M^{2m},F,g)$. Then the tension field $\widetilde{\tau}(I)$ of $I$ is expressed by:
\begin{eqnarray*}
\widetilde{\tau}(I)=\dfrac{m-1}{\alpha^{2}}grad^{}\alpha.
\end{eqnarray*}
\end{theorem}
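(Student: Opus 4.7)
The plan is to mirror the computation of Theorem \ref{th_7}, but now with the source being $(M^{2m}, g^{\alpha})$ and the target being $(M^{2m}, F, g)$, so that the roles of $\nabla$ and $\widetilde{\nabla}$ essentially swap. Concretely, I will expand
\begin{eqnarray*}
\widetilde{\tau}(I) &=& \sum_{i=1}^{2m}\bigl(\nabla^{I}_{\tilde{e}_{i}}dI(\tilde{e}_{i}) - dI(\widetilde{\nabla}_{\tilde{e}_{i}}\tilde{e}_{i})\bigr)
= \sum_{i=1}^{2m}\bigl(\nabla_{\tilde{e}_{i}}\tilde{e}_{i} - \widetilde{\nabla}_{\tilde{e}_{i}}\tilde{e}_{i}\bigr),
\end{eqnarray*}
using the local orthonormal frame $\{\tilde{e}_{i}\}$ of $g^{\alpha}$ introduced in Remark \ref{rem_1}, i.e. $\tilde{e}_{i}=\alpha^{-1/2}e_{i}$ for $i\le 2m-1$ and $\tilde{e}_{2m}=(2\alpha)^{-1/2}FV$, where $\{e_{i}\}$ is $g$-orthonormal with $e_{2m}=FV$.

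Next I would invoke Theorem \ref{th_1} with $X=Y=\tilde{e}_{i}$ to get
\begin{eqnarray*}
\widetilde{\nabla}_{\tilde{e}_{i}}\tilde{e}_{i}-\nabla_{\tilde{e}_{i}}\tilde{e}_{i} = \frac{\tilde{e}_{i}(\alpha)}{\alpha}\tilde{e}_{i} - \frac{1}{2\alpha^{2}}g^{\alpha}(\tilde{e}_{i},\tilde{e}_{i})\,grad^{}\alpha,
\end{eqnarray*}
so that
\begin{eqnarray*}
\widetilde{\tau}(I) = -\sum_{i=1}^{2m}\frac{\tilde{e}_{i}(\alpha)}{\alpha}\tilde{e}_{i} + \frac{1}{2\alpha^{2}}\sum_{i=1}^{2m}g^{\alpha}(\tilde{e}_{i},\tilde{e}_{i})\,grad^{}\alpha.
\end{eqnarray*}
The second sum gives $2m$ because the $\tilde{e}_{i}$ are $g^{\alpha}$-unit. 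For the first sum, the term $i=2m$ drops out thanks to \eqref{eq_A}, i.e.\ $(FV)(\alpha)=0$, and for $i\le 2m-1$ a direct substitution yields $\frac{\tilde{e}_{i}(\alpha)}{\alpha}\tilde{e}_{i}=\frac{e_{i}(\alpha)}{\alpha^{2}}e_{i}$.

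Finally I would identify the remaining sum with $\alpha^{-2}\,grad^{}\alpha$: since $(FV)(\alpha)=0$, the gradient decomposes as
\begin{eqnarray*}
grad^{}\alpha = \sum_{i=1}^{2m-1}e_{i}(\alpha)e_{i} + (FV)(\alpha)\,FV = \sum_{i=1}^{2m-1}e_{i}(\alpha)e_{i}.
\end{eqnarray*}
Combining gives $\widetilde{\tau}(I) = -\alpha^{-2}grad^{}\alpha + m\alpha^{-2}grad^{}\alpha = \frac{m-1}{\alpha^{2}}grad^{}\alpha$, which is the claim. There is no real obstacle here; the only point requiring care is the factor $\frac{1}{2}$ in $\tilde{e}_{2m}=(2\alpha)^{-1/2}FV$, which makes the last term of the $g^{\alpha}(\tilde{e}_{i},\tilde{e}_{i})$-sum equal to $1$ rather than $\frac{1}{2}$, and the use of \eqref{eq_A} to kill the $FV$-contribution in the gradient sum.
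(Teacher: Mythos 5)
Your proposal is correct and follows essentially the same route as the paper's own proof: the same $g^{\alpha}$-orthonormal frame from Remark \ref{rem_1}, the same pointwise application of Theorem \ref{th_1} to $\widetilde{\nabla}_{\tilde{e}_{i}}\tilde{e}_{i}-\nabla_{\tilde{e}_{i}}\tilde{e}_{i}$, and the same use of $(FV)(\alpha)=0$ to discard the $FV$-contribution, yielding $-\alpha^{-2}grad^{}\alpha+m\alpha^{-2}grad^{}\alpha$. Nothing further is needed.
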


\begin{proof}
Given a local orthonormal frame $\{\tilde{e}_{i}\}_{i=1,\ldots,2m}$ on $(M^{2m}, g^{\alpha})$  defined by \eqref{eq_R}. Then we have:
\begin{eqnarray*}
\widetilde{\tau}(I)&=&\sum_{i=1}^{2m}\left( \nabla_{\tilde{e}_{i}}\tilde{e}_{i}-\widetilde{\nabla}_{\tilde{e}_{i}}\tilde{e}_{i}\right) .
\end{eqnarray*}
Using theorem \ref{th_1}, we get:
\begin{eqnarray*}
\widetilde{\tau}(I)&=&\sum_{i=1}^{2m}\left( \nabla_{\tilde{e}_{i}}\tilde{e}_{i}-\nabla_{\tilde{e}_{i}}\tilde{e}_{i}-\dfrac{1}{\alpha}\tilde{e}_{i}(\alpha)\tilde{e}_{i}+\frac{1}{2\alpha^{2}}g^{\alpha}(\tilde{e}_{i},\tilde{e}_{i})grad^{}\alpha\right) \\
&=&-\dfrac{1}{\alpha^{2}}\left(\sum_{i=1}^{2m-1} e_{i}(\alpha)e_{i}+\dfrac{1}{2}FV(\alpha)FV\right)+\frac{1}{2\alpha^{2}}\sum_{i=1}^{2m}g^{\alpha}(\tilde{e}_{i},\tilde{e}_{i})grad^{}\alpha \\
&=&-\dfrac{1}{\alpha^{2}}grad^{}\alpha+\dfrac{m}{\alpha^{2}}grad^{}\alpha\\
&=&\dfrac{m-1}{\alpha^{2}}grad^{}\alpha.
\end{eqnarray*}	
\end{proof}

We note that the identity map $I:(M^{2m}, g^{\alpha})\rightarrow (M^{2m},F,g)$  is harmonic if and only if
$\alpha$is constant or $\dim M=2$.

\subsection{Harmonicity of the map $\varphi:(M^{m},g)\rightarrow (N^{2n},h^{\alpha})$}\quad\\
Let $(N^{2n},F,h)$ be a para-K\"{a}hler-Norden manifold, $\alpha : N\rightarrow]0,+\infty[$ be a strictly positive smooth function and $h^{\alpha}$ the Berger-type metric conformal deformation of $h$.

\begin{theorem}\label{th_9} Given a smooth map $\varphi:(M^{m} ,g)\rightarrow (N^{2n},h^{\alpha})$. Then
the tension field $\widetilde{\tau}(\varphi)$ of the map $\varphi$ is expressed by:
\begin{eqnarray*}
\widetilde{\tau}(\varphi)=\tau(\varphi)+\dfrac{1}{\alpha} d\varphi\left(grad^{M}(f\circ\varphi)\right)-\dfrac{1}{2\alpha^{2}}Tr_{g}h^{\alpha}(d\varphi(\ast),d\varphi(\ast))grad^{N}f.
\end{eqnarray*}
where  $\tau(\varphi)$ is the tension field of $\varphi:(M^{m}, g)\rightarrow (N^{2n},F,h)$.
\end{theorem}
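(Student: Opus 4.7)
The plan is to compute $\widetilde{\tau}(\varphi)$ directly in a local orthonormal frame on $M$, use Theorem \ref{th_1} (transplanted to $N$) to swap the Levi-Civita connection $\widetilde{\nabla}$ of $h^{\alpha}$ for the Levi-Civita connection $\nabla$ of $h$, and then recognise that the $\nabla$--part reassembles into $\tau(\varphi)$.

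First I would fix a local orthonormal frame $\{e_{i}\}_{i=1,\ldots,m}$ on $(M,g)$ and write
\begin{equation*}
\widetilde{\tau}(\varphi)=\sum_{i=1}^{m}\Bigl(\widetilde{\nabla}^{\varphi}_{e_{i}}d\varphi(e_{i})-d\varphi(\nabla^{M}_{e_{i}}e_{i})\Bigr),\qquad \tau(\varphi)=\sum_{i=1}^{m}\Bigl(\nabla^{\varphi}_{e_{i}}d\varphi(e_{i})-d\varphi(\nabla^{M}_{e_{i}}e_{i})\Bigr),
\end{equation*}
so that the $d\varphi(\nabla^{M}_{e_{i}}e_{i})$ terms cancel in the difference and one is left with
\begin{equation*}
\widetilde{\tau}(\varphi)-\tau(\varphi)=\sum_{i=1}^{m}\Bigl(\widetilde{\nabla}_{d\varphi(e_{i})}d\varphi(e_{i})-\nabla_{d\varphi(e_{i})}d\varphi(e_{i})\Bigr)
\end{equation*}
interpreted along the map $\varphi$ via the pull-back connection.

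Next I would apply Theorem \ref{th_1} on $N$ with $X=Y=d\varphi(e_{i})$, which gives
\begin{equation*}
\widetilde{\nabla}_{d\varphi(e_{i})}d\varphi(e_{i})-\nabla_{d\varphi(e_{i})}d\varphi(e_{i})=\dfrac{d\varphi(e_{i})(\alpha)}{\alpha}\,d\varphi(e_{i})-\dfrac{1}{2\alpha^{2}}h^{\alpha}(d\varphi(e_{i}),d\varphi(e_{i}))\,grad^{N}\alpha,
\end{equation*}
since the two symmetric first-order terms in Theorem \ref{th_1} coincide here and add up. Using the chain rule $d\varphi(e_{i})(\alpha)=e_{i}(\alpha\circ\varphi)$ and summing over $i$, the first contribution becomes
\begin{equation*}
\sum_{i=1}^{m}\dfrac{e_{i}(\alpha\circ\varphi)}{\alpha}\,d\varphi(e_{i})=\dfrac{1}{\alpha}\,d\varphi\!\left(\sum_{i=1}^{m}e_{i}(\alpha\circ\varphi)\,e_{i}\right)=\dfrac{1}{\alpha}\,d\varphi\bigl(grad^{M}(\alpha\circ\varphi)\bigr),
\end{equation*}
while the second contribution repackages into $-\dfrac{1}{2\alpha^{2}}Tr_{g}h^{\alpha}(d\varphi(\ast),d\varphi(\ast))\,grad^{N}\alpha$, yielding the stated formula.

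The computation is essentially bookkeeping once Theorem \ref{th_1} is available. The only slightly delicate point is the application of that formula to the non-genuine vector fields $d\varphi(e_{i})$: this is legitimate because the difference $\widetilde{\nabla}_{X}Y-\nabla_{X}Y$ is $C^{\infty}(N)$--tensorial in $X$ and $Y$, so the expression depends only on the pointwise values of $d\varphi(e_{i})$ and transports correctly to the pull-back connection $\nabla^{\varphi}$. No other obstacle is anticipated.
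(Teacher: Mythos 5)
Your proposal is correct and follows essentially the same route as the paper: fix an orthonormal frame on $(M,g)$, apply Theorem \ref{th_1} on the target with $X=Y=d\varphi(e_{i})$ so the two first-order terms combine into $\frac{d\varphi(e_{i})(\alpha)}{\alpha}d\varphi(e_{i})$, then resum using the chain rule to obtain $\tau(\varphi)$ plus the two correction terms (the paper's $f$ is just a notational slip for $\alpha$). Your added remark that the difference $\widetilde{\nabla}-\nabla$ is tensorial, which legitimises applying the formula along the map via the pull-back connection, is a point the paper passes over silently.
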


\begin{proof} Given a local orthonormal frame $\{e_{i}\}_{i=1,\ldots,m}$ on $(M^{m}, g)$. We calculate the tension field $\widetilde{\tau}(\varphi)$ of the map $\varphi$.
\begin{eqnarray*}
\widetilde{\tau}(\varphi)&=&\sum_{i=1}^{m}\left( \widetilde{\nabla}^{\varphi}_{e_{i}}d\varphi(e_{i})-d\varphi(\nabla_{e_{i}}e_{i})\right) \\
&=&\sum_{i=1}^{m}\left( \widetilde{\nabla}^{N}_{d\varphi(e_{i})}d\varphi(e_{i})-d\varphi(\nabla_{e_{i}}e_{i})\right) \\
&=& \sum_{i=1}^{m}\left( \nabla^{N}_{d\varphi(e_{i})}d\varphi(e_{i})+\dfrac{d\varphi(e_{i})(\alpha)}{\alpha}d\varphi(e_{i})\right.\\
&&\left.-\dfrac{1}{2\alpha^{2}}h^{\alpha}(d\varphi(e_{i}), d\varphi(e_{i}))grad^{}\alpha-d\varphi(\nabla_{e_{i}}e_{i})\right) \\
&=&\tau(\varphi)+\dfrac{1}{\alpha}d\varphi\left(grad^{M}(f\circ\varphi)\right)-\dfrac{1}{2\alpha^{2}}Tr_{g}h^{\alpha}(d\varphi(\ast),d\varphi(\ast))grad^{N}f.
\end{eqnarray*}
\end{proof}

\begin{theorem}\label{th_10} Given a smooth map $\varphi:(M^{m} ,g)\rightarrow (N^{2n},h^{\alpha})$. Then
the map $\varphi$ is harmonic if and only if
\begin{eqnarray*}
\tau(\varphi)=-\dfrac{1}{\alpha}d\varphi\left(grad^{M}(f\circ\varphi)\right)+\dfrac{1}{2\alpha^{2}}Tr_{g}h^{\alpha}(d\varphi(\ast),d\varphi(\ast))grad^{N}f.
\end{eqnarray*}
\end{theorem}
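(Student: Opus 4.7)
The plan is very short because Theorem \ref{th_10} is an immediate corollary of Theorem \ref{th_9} together with the definition of harmonicity. Recall that a smooth map $\varphi:(M^{m},g)\to(N^{2n},h^{\alpha})$ is harmonic if and only if its tension field $\widetilde{\tau}(\varphi)$ with respect to the target metric $h^{\alpha}$ vanishes identically on $M$. So the task reduces to setting the formula for $\widetilde{\tau}(\varphi)$ obtained in Theorem \ref{th_9} equal to zero and solving for $\tau(\varphi)$.

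First, I would simply quote the conclusion of Theorem \ref{th_9}, namely
\begin{equation*}
\widetilde{\tau}(\varphi)=\tau(\varphi)+\dfrac{1}{\alpha}\,d\varphi\bigl(grad^{M}(f\circ\varphi)\bigr)-\dfrac{1}{2\alpha^{2}}\,Tr_{g}h^{\alpha}\bigl(d\varphi(\ast),d\varphi(\ast)\bigr)grad^{N}f,
\end{equation*}
where $\tau(\varphi)$ denotes the tension field of $\varphi$ viewed as a map into $(N^{2n},F,h)$.

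Next, imposing $\widetilde{\tau}(\varphi)=0$ and transposing the two correction terms to the right-hand side yields exactly the stated identity. There is no additional geometric content to verify: the equivalence is purely algebraic once the tension-field formula is in hand.

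Consequently, the main (and really only) obstacle already lies in Theorem \ref{th_9}; here one simply records that harmonicity of $\varphi$ is equivalent to the vanishing of $\widetilde{\tau}(\varphi)$ and rearranges. No subtle step is expected, and the proof can be written in a few lines.
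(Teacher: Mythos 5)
Your proposal is correct and matches the paper's (implicit) argument: Theorem \ref{th_10} is stated there without a separate proof precisely because it follows by setting the tension field formula of Theorem \ref{th_9} equal to zero and rearranging, exactly as you do. Nothing further is needed.
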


\subsection{Harmonicity of the map $\phi:(M^{2m},g^{\alpha})\rightarrow (N^{n},h)$}\quad\\
Let $(M^{2m},F,g)$ be a para-K\"{a}hler-Norden manifold and $\alpha : M\rightarrow]0,+\infty[$ be a strictly positive smooth function, where $g^{\alpha}$ the Berger-type metric conformal deformation of $g$.

\begin{theorem}\label{th_11} Given a smooth map $\phi:(M^{2m},g^{\alpha})\rightarrow (N^{n},h)$. Then
the tension field $\widetilde{\tau}(\phi)$ of the map $\phi$ is expressed by:
\begin{eqnarray*}
\widetilde{\tau}(\phi)&=&\dfrac{1}{\alpha}\tau(\phi)-\dfrac{1}{2\alpha}\nabla^{N}_{d\phi(FV)}d\phi(FV)+\dfrac{m-1}{\alpha^{2}}d\phi(grad^{}\alpha),
\end{eqnarray*}
where  $\tau(\phi)$ is the tension field of $\phi:(M^{2m},F, g)\rightarrow (N^{n},h)$.
\end{theorem}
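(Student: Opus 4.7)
The strategy parallels the proof of Theorem \ref{th_8}. Pick a local orthonormal frame $\{e_i\}_{i=1,\ldots,2m}$ on $M$ with $e_{2m}=FV$, and form the $g^{\alpha}$-orthonormal frame $\{\tilde e_i\}$ of Remark \ref{rem_1}. The plan is to expand
\[
\widetilde{\tau}(\phi)=\sum_{i=1}^{2m}\bigl(\nabla^{\phi}_{\tilde e_i}d\phi(\tilde e_i)-d\phi\bigl(\widetilde\nabla_{\tilde e_i}\tilde e_i\bigr)\bigr)
\]
and split the sum into the ``horizontal'' block $i\leq 2m-1$ (where $\tilde e_i=\alpha^{-1/2}e_i$) plus the single ``vertical'' index $i=2m$ (where $\tilde e_{2m}=(2\alpha)^{-1/2}FV$), then compare against the unrescaled tension $\tau(\phi)=\sum_{i=1}^{2m}\bigl(\nabla^{\phi}_{e_i}d\phi(e_i)-d\phi(\nabla_{e_i}e_i)\bigr)$.

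For each $i\leq 2m-1$, I would pull the scalar factor $\alpha^{-1/2}$ through the pullback connection and apply Theorem \ref{th_1} to $\widetilde\nabla_{\tilde e_i}\tilde e_i$. A routine expansion makes the $e_i(1/\sqrt\alpha)$ correction from the pullback derivative cancel against half of the $\tilde e_i(\alpha)/\alpha$ term in \eqref{eq_D}, leaving
\[
\nabla^{\phi}_{\tilde e_i}d\phi(\tilde e_i)-d\phi\bigl(\widetilde\nabla_{\tilde e_i}\tilde e_i\bigr)=\tfrac{1}{\alpha}\bigl(\nabla^{\phi}_{e_i}d\phi(e_i)-d\phi(\nabla_{e_i}e_i)\bigr)-\tfrac{e_i(\alpha)}{\alpha^{2}}d\phi(e_i)+\tfrac{1}{2\alpha^{2}}d\phi(\mathrm{grad}\,\alpha).
\]

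The index $i=2m$ is the special one: $(FV)(\alpha)=0$ from \eqref{eq_A} makes the scaling factor $(2\alpha)^{-1/2}$ constant along $FV$, and $V$ being parallel together with $\nabla F=0$ forces $\nabla_{FV}FV=0$. Hence $\widetilde\nabla_{\tilde e_{2m}}\tilde e_{2m}=-\tfrac{1}{2\alpha^{2}}\mathrm{grad}\,\alpha$ by \eqref{eq_D}, and this single contribution collapses to $\tfrac{1}{2\alpha}\nabla^{\phi}_{FV}d\phi(FV)+\tfrac{1}{2\alpha^{2}}d\phi(\mathrm{grad}\,\alpha)$.

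To close, I would use the identity $\sum_{i=1}^{2m-1}e_i(\alpha)\,d\phi(e_i)=d\phi(\mathrm{grad}\,\alpha)$, valid again because $(FV)(\alpha)=0$, and note that the $i=2m$ piece of $\tau(\phi)$ reduces to $\nabla^{\phi}_{FV}d\phi(FV)$ (since $\nabla_{FV}FV=0$), so $\sum_{i=1}^{2m-1}\bigl(\nabla^{\phi}_{e_i}d\phi(e_i)-d\phi(\nabla_{e_i}e_i)\bigr)=\tau(\phi)-\nabla^{\phi}_{FV}d\phi(FV)$. Adding the two blocks, the three $d\phi(\mathrm{grad}\,\alpha)$ contributions combine with coefficient $\tfrac{2m-1}{2\alpha^{2}}-\tfrac{1}{\alpha^{2}}+\tfrac{1}{2\alpha^{2}}=\tfrac{m-1}{\alpha^{2}}$, while the $\nabla^{\phi}_{FV}d\phi(FV)$ pieces combine to $-\tfrac{1}{\alpha}+\tfrac{1}{2\alpha}=-\tfrac{1}{2\alpha}$, yielding exactly the claimed formula. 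The main obstacle is purely bookkeeping: keeping track of the three separate sources of $d\phi(\mathrm{grad}\,\alpha)$ and ensuring that the isolated $FV$-direction, whose rescaling uses $\sqrt{2\alpha}$ rather than $\sqrt\alpha$, contributes the correct half of the $\nabla^{\phi}_{FV}d\phi(FV)$ term; no identity beyond \eqref{eq_A}, \eqref{eq_D}, parallelism of $V$, and $\nabla F=0$ is required.
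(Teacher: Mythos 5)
Your proposal is correct and follows essentially the same route as the paper: the same frame $\{\tilde e_i\}$ from Remark \ref{rem_1}, the same split into the $i\le 2m-1$ block and the $FV$-direction, and the same use of Theorem \ref{th_1} together with $(FV)(\alpha)=0$ and $\nabla_{FV}FV=0$. Your intermediate formulas and the final bookkeeping of the $d\phi(\mathrm{grad}\,\alpha)$ and $\nabla^{\phi}_{FV}d\phi(FV)$ coefficients all check out, so this is a valid proof matching the paper's argument (merely organized per index rather than as two separate sums).
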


\begin{proof} Given a local orthonormal frame $\{\tilde{e}_{i}\}_{i=1,\ldots,2m}$ on $(M^{2m}, g^{\alpha})$  defined by \eqref{eq_R}. Then we have:
\begin{eqnarray}\label{eq_AB}
\widetilde{\tau}(\phi)&=&\sum_{i=1}^{2m}\left( \nabla^{\phi}_{\tilde{e}_{i}}d\phi(\tilde{e}_{i})-d\phi(\widetilde{\nabla}_{\tilde{e}_{i}}\tilde{e}_{i})\right),
\end{eqnarray}
where  $\nabla^{\phi}$ is the pull-back connection induced by $\phi:(M^{2m},g^{\alpha})\rightarrow (N^{n},h)$.\\
By direct calculations, we obtain:
\begin{eqnarray}\label{eq_AC}
\sum_{i=1}^{2m}\nabla^{\phi}_{\tilde{e}_{i}}d\phi(\tilde{e}_{i})&=&\dfrac{1}{\sqrt{\alpha}}\sum_{i=1}^{2m-1}\nabla^{\phi}_{e_{i}}\left(\dfrac{1}{\sqrt{\alpha}}d\phi(e_{i})\right)+\dfrac{1}{\sqrt{2\alpha}}\nabla^{\phi}_{FV}\left(\dfrac{1}{\sqrt{2\alpha}}d\phi(FV)\right)\nonumber\\
&=&\dfrac{-1}{2\alpha^{2}}d\phi(grad^{}\alpha)+\dfrac{1}{\alpha}\sum_{i=1}^{2m-1}\nabla^{\phi}_{e_{i}}d\phi(e_{i})+\dfrac{1}{2\alpha}\nabla^{\phi}_{FV}d\phi(FV)\nonumber\\
&=&\dfrac{-1}{2\alpha^{2}}d\phi(grad^{}\alpha)-\dfrac{1}{2\alpha}\nabla^{\phi}_{FV}d\phi(FV)+\dfrac{1}{\alpha}\sum_{i=1}^{2m}\nabla^{\phi}_{e_{i}}d\phi(e_{i}).
\end{eqnarray}
By similar computations, we obtain:
\begin{eqnarray*}
\sum_{i=1}^{2m}d\phi\left(\widetilde{\nabla}_{\tilde{e}_{i}}\tilde{e}_{i}\right)&=&\sum_{i=1}^{2m-1}d\phi\left( \widetilde{\nabla}_{\tilde{e}_{i}}\tilde{e}_{i}\right) +d\phi\left(\widetilde{\nabla}_{\tilde{e}_{2m}}\tilde{e}_{2m}\right)\\
&=&\dfrac{-1}{2\alpha^{2}}d\phi(grad^{}\alpha)+\dfrac{1}{\alpha}\sum_{i=1}^{2m-1}d\phi\left(\widetilde{\nabla}_{e_{i}}e_{i}\right) +\dfrac{1}{2\alpha}d\phi\left(\widetilde{\nabla}_{FV}FV\right).
\end{eqnarray*}
Using Theorem \ref{th_1}, we find: 
\begin{eqnarray}\label{eq_AD}
\sum_{i=1}^{2m}d\phi\left(\widetilde{\nabla}_{\tilde{e}_{i}}\tilde{e}_{i}\right)
&=&\dfrac{-1}{2\alpha^{2}}d\phi(grad^{}\alpha)-\dfrac{1}{2\alpha}d\phi\left(\frac{1}{2\alpha^{2}}g^{\alpha}(FV,FV)grad^{}\alpha)\right)\nonumber\\
&&+\dfrac{1}{\alpha}\sum_{i=1}^{2m-1}d\phi\left(\nabla_{e_{i}}e_{i}+\frac{e_{i}(\alpha)}{\alpha}e_{i}-\frac{1}{2\alpha^{2}}g^{\alpha}(e_{i},e_{i})grad^{}\alpha\right)\nonumber\\
&=&\dfrac{1-2m}{2\alpha^{2}}d\phi(grad^{}\alpha)+\dfrac{1}{\alpha}\sum_{i=1}^{2m}d\phi\nabla_{e_{i}}e_{i}.
\end{eqnarray}
In fact, by adding \eqref{eq_AC} and \eqref{eq_AD} in \eqref{eq_AB}, the proof is completed.
\end{proof}

\begin{theorem}\label{th_12} Given a smooth map $\phi:(M^{2m},g^{\alpha})\rightarrow (N^{n},h)$. Then
the map $\phi$ is harmonic if and only if
\begin{eqnarray*}
\tau(\phi)=\dfrac{1}{2}\nabla^{N}_{d\phi(FV)}d\phi(FV)-\dfrac{m-1}{\alpha}d\phi(grad^{}\alpha).
\end{eqnarray*}
\end{theorem}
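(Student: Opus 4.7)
The plan is to derive Theorem \ref{th_12} as an immediate consequence of Theorem \ref{th_11}. Recall that by definition, a smooth map $\phi:(M^{2m},g^{\alpha})\rightarrow(N^{n},h)$ is harmonic if and only if its tension field $\widetilde{\tau}(\phi)$ (computed with respect to the domain metric $g^{\alpha}$) vanishes identically on $M$.

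The strategy is to invoke the explicit formula for $\widetilde{\tau}(\phi)$ established in Theorem \ref{th_11}:
\begin{eqnarray*}
\widetilde{\tau}(\phi)=\dfrac{1}{\alpha}\tau(\phi)-\dfrac{1}{2\alpha}\nabla^{N}_{d\phi(FV)}d\phi(FV)+\dfrac{m-1}{\alpha^{2}}d\phi(grad^{}\alpha),
\end{eqnarray*}
and set this expression equal to zero. Since $\alpha$ is strictly positive, I may multiply through by $\alpha$ without losing equivalence. Isolating $\tau(\phi)$ on one side then yields the stated characterization, which is the desired equivalence.

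The only step requiring any attention is the algebraic rearrangement: after multiplying by $\alpha$, the first term becomes $\tau(\phi)$, the second becomes $\tfrac{1}{2}\nabla^{N}_{d\phi(FV)}d\phi(FV)$, and the third becomes $\tfrac{m-1}{\alpha}d\phi(grad^{}\alpha)$. Transposing the last two terms to the right-hand side gives exactly the formula in the statement. There is no genuine obstacle here; the content of the theorem is entirely encoded in Theorem \ref{th_11}, and this result simply repackages it as a harmonicity criterion.
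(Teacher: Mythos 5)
Your proposal is correct and matches the paper's (implicit) argument exactly: Theorem \ref{th_12} is simply the statement $\widetilde{\tau}(\phi)=0$ rewritten using the formula of Theorem \ref{th_11}, multiplied by $\alpha>0$ and rearranged, which is precisely how the paper treats it (no separate proof is given there).
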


%
%
%

\section{The biharmonicity of identity map}\label{Sec5}
\subsection{The biharmonicity of identity map $I:(M^{2m},F,g)\rightarrow (M^{2m}, g^{\alpha})$}
\begin{theorem}\label{th_14}
The bitension field $\widetilde{\tau}_{2}(I)$ of the identity map\\ $I:(M^{2m},F,g)\rightarrow (M^{2m}, g^{\alpha})$ is given by:
\begin{eqnarray}\label{eq_AE}
\widetilde{\tau}_{2}(I)&=&\dfrac{1-2m}{2\alpha} \left[\Delta^{I}grad^{}\alpha-Ricci(grad^{}\alpha)+\dfrac{2m-1}{2\alpha}\nabla_{grad^{}\alpha}grad^{}\alpha\right. \nonumber\\
&&\left.-\left(\dfrac{(2m+3)\|grad^{}\alpha\|^{2}}{4\alpha^{2}}-\dfrac{2\Delta(\alpha)}{\alpha}\right) grad^{}\alpha-\nabla_{FV}\nabla_{FV}grad^{}\right],  
\end{eqnarray}
where $\Delta^{I}grad^{}\alpha=-Tr_{g}(\nabla_{\ast}\nabla_{\ast}-\nabla_{\nabla_{\ast}\ast})grad^{}\alpha$.
\end{theorem}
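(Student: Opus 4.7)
The plan is to apply the bitension formula
\[
\widetilde{\tau}_2(I) = \widetilde{\Delta}^{I}\widetilde{\tau}(I) - \sum_{i=1}^{2m}\widetilde{R}\bigl(\widetilde{\tau}(I), dI(e_i)\bigr)dI(e_i),
\]
with $\widetilde{\tau}(I) = \tfrac{1-2m}{2\alpha}grad^{}\alpha$ supplied by Theorem \ref{th_7}, and to expand both terms via Theorems \ref{th_1} and \ref{th_2}. Because $I$ is the identity, the pull-back connection coincides with $\widetilde{\nabla}$ acting on $TM$ and $dI(e_i)=e_i$; the trace is taken with respect to $g$, so I would use the $g$-orthonormal frame $\{e_i\}_{i=1,\ldots,2m}$ with $e_{2m}=FV$ already employed in the previous proofs. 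Keeping the $FV$-direction separate from the other $e_i$ is what will ultimately isolate the $\nabla_{FV}\nabla_{FV}grad^{}\alpha$ term in the target formula, since in that direction the $g^{\alpha}$-norm of $FV$ differs from the $g$-norm by the Berger factor.

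Next I would expand $\widetilde{\Delta}^{I}\widetilde{\tau}(I) = -\sum_{i}\bigl(\widetilde{\nabla}_{e_i}\widetilde{\nabla}_{e_i} - \widetilde{\nabla}_{\nabla_{e_i}e_i}\bigr)\bigl(\tfrac{1-2m}{2\alpha}grad^{}\alpha\bigr)$ by substituting \eqref{eq_D} twice: the inner application converts $\widetilde{\nabla}$ into $\nabla$ plus three conformal correction terms, and the outer application then differentiates each of those pieces and itself contributes three further correction terms. A substantial fraction of the resulting cross terms collapses via the identities \eqref{eq_A} and \eqref{eq_B}---in particular $(FV)(\alpha)=0$, $Hess_\alpha(X,FV)=0$, the correspondence $\nabla_X grad^{}\alpha\leftrightarrow Hess_\alpha$, and the trace identity $\sum_i Hess_\alpha(e_i,e_i)=\Delta(\alpha)$. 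Out of this step emerge $\Delta^{I}grad^{}\alpha$, the term $\tfrac{2m-1}{2\alpha}\nabla_{grad^{}\alpha}grad^{}\alpha$, a multiple of $\tfrac{\Delta(\alpha)}{\alpha}grad^{}\alpha$, part of the $\|grad^{}\alpha\|^{2}grad^{}\alpha$ coefficient, and the isolated $\nabla_{FV}\nabla_{FV}grad^{}\alpha$ contribution coming from the $i=2m$ summand.

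Finally I would compute the curvature trace $-\tfrac{1-2m}{2\alpha}\sum_{i}\widetilde{R}(grad^{}\alpha,e_i)e_i$ by substituting \eqref{eq_F}. The $R(grad^{}\alpha,e_i)e_i$ piece traces to $-Ricci(grad^{}\alpha)$; the $Hess_\alpha$- and $X(\alpha)Z(\alpha)$-terms in \eqref{eq_F} trace either to scalar multiples of $grad^{}\alpha$ or to $\nabla_{grad^{}\alpha}grad^{}\alpha$; and the $\|grad^{}\alpha\|^{2}$ factors in the $g^{\alpha}$-weighted terms supply the remainder of the $\tfrac{2m+3}{4\alpha^{2}}\|grad^{}\alpha\|^{2}$ coefficient in front of $grad^{}\alpha$. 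Combining this with the rough-Laplacian expansion and factoring out $\tfrac{1-2m}{2\alpha}$ yields \eqref{eq_AE}. The main obstacle is purely the bookkeeping in the iterated $\widetilde{\nabla}^{2}$ step: each of the three correction terms in \eqref{eq_D} gets differentiated a second time, producing roughly a dozen $X(\alpha)$- or $g^{\alpha}$-weighted cross terms whose coefficients must be reconciled with those from the curvature trace. No new geometric identity is needed beyond Lemma \ref{lem_1}, Theorem \ref{th_1}, Corollary \ref{cor_01}, and Theorem \ref{th_2}; the delicate point is that the coefficient $\tfrac{2m+3}{4\alpha^{2}}$ aggregates contributions from the conformal factor being differentiated twice, from the $FV$-separation in the trace, and from the traced curvature, and getting all three pieces with the correct signs is where the computation is most error-prone.
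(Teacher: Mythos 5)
Your proposal follows essentially the same route as the paper's own proof: it splits $\widetilde{\tau}_{2}(I)$ into the rough Laplacian of $\widetilde{\tau}(I)=\tfrac{1-2m}{2\alpha}grad^{}\alpha$ (expanded twice via Theorem \ref{th_1}) plus the traced curvature term (expanded via Theorem \ref{th_2}), using the same $g$-orthonormal frame with $e_{2m}=FV$ and the identities \eqref{eq_A}, \eqref{eq_B} to collapse the cross terms. The only remaining work in both your plan and the paper is the coefficient bookkeeping leading to \eqref{eq_AG}--\eqref{eq_AH}, so the approach is correct and not genuinely different.
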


\begin{proof}
 Given a local orthonormal frame $\{e_{i}\}_{i=1,\ldots,2m}$ on $M$, with $e_{2m}=FV$.
The bitension field of the identity map $I:(M^{2m},F,g)\rightarrow (M^{2m}, g^{\alpha})$ is given by:
\begin{eqnarray}\label{eq_AF}
\widetilde{\tau}_{2}(I)&=&\widetilde{\Delta}^{I}\tau(I)-Tr_{g}\widetilde{R}(\tau(I),dI)dI.
\end{eqnarray} 
From Theorem $\ref{th_7}$, we have:
\begin{eqnarray*}
\tau(I)=\dfrac{1-2m}{2\alpha}grad^{}\alpha.
\end{eqnarray*}
First we calculate $\widetilde{\Delta}^{I}\tau(I)$, by Theorem $\ref{th_1}$, we get:
\begin{eqnarray}\label{eq_AG}
\widetilde{\Delta}^{I}\tau(I)&=&\sum_{i=1}^{2m}\big(\widetilde{\nabla}_{e_{i}}\widetilde{\nabla}_{e_{i}}(\dfrac{1-2m}{2\alpha}grad^{}\alpha)-\widetilde{\nabla}_{\nabla_{e_{i}}e_{i}}(\dfrac{1-2m}{2\alpha}grad^{}\alpha)\big)\nonumber\\
&=&\displaystyle\dfrac{1-2m}{2\alpha} \Big[\Delta^{I}grad^{}\alpha+\left(\dfrac{(2m-3)\|grad^{}\alpha\|^{2}}{4\alpha^{2}}+\dfrac{3\Delta(\alpha)}{2\alpha}\right) grad^{}\alpha \nonumber\\
&&-\nabla_{FV}\nabla_{FV}grad^{}\Big].  
\end{eqnarray}
We also calculate $-Tr_{g}\widetilde{R}(\widetilde{\tau}(I),dI)dI$, by Theorem $\ref{th_2}$, we find:
\begin{eqnarray}\label{eq_AH}
-Tr_{g}\widetilde{R}(\widetilde{\tau}(I),dI)dI&=&\dfrac{2m-1}{2\alpha}\sum_{i=1}^{2m}\widetilde{R}(grad^{}\alpha,dI(e_{i}))dI(e_{i})\nonumber\\
&=&\dfrac{1-2m}{2\alpha} \left[-Ricci(grad^{}\alpha)+\dfrac{2m-1}{2\alpha}\nabla_{grad^{}\alpha}grad^{}\alpha\right. \nonumber\\
&&\left.-\left(\dfrac{m\|grad^{}\alpha\|^{2}}{\alpha^{2}}-\dfrac{\Delta(\alpha)}{2\alpha}\right)grad^{}\alpha\right] . 
\end{eqnarray}
Substituting $(\ref{eq_AG})$ and $(\ref{eq_AH})$ into $(\ref{eq_AF})$, we find $(\ref{eq_AE})$.
\end{proof}

\begin{theorem}\label{th_15}
The identity map $I:(M^{2m},F,g)\rightarrow (M^{2m}, g^{\alpha})$ is a biharmonic if
and only if 
\begin{eqnarray*}
&&\Delta^{I}grad^{}\alpha-Ricci(grad^{}\alpha)+\dfrac{2m-1}{2\alpha}\nabla_{grad^{}\alpha}grad^{}\alpha\\
&&-\left(\dfrac{(2m+3)\|grad^{}\alpha\|^{2}}{4\alpha^{2}}-\dfrac{2\Delta(\alpha)}{\alpha}\right) grad^{}\alpha-\nabla_{FV}\nabla_{FV}grad^{}=0.
\end{eqnarray*}
\end{theorem}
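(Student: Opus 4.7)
The plan is to observe that this theorem is an immediate corollary of Theorem \ref{th_14}. By definition, a map is biharmonic precisely when its bitension field vanishes, so the statement reduces to reading off when the right-hand side of \eqref{eq_AE} is zero.

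The key observation is that the expression for $\widetilde{\tau}_{2}(I)$ provided by Theorem \ref{th_14} factors cleanly as a nonzero scalar times a vector-valued expression. Specifically, the scalar prefactor is $\frac{1-2m}{2\alpha}$. Since $M$ has dimension $2m \geq 2$ (so $m \geq 1$, giving $1-2m \leq -1$) and since $\alpha$ is a strictly positive smooth function on $M$, the factor $\frac{1-2m}{2\alpha}$ never vanishes at any point of $M$. Consequently, $\widetilde{\tau}_{2}(I)=0$ on $M$ if and only if the bracketed vector field in \eqref{eq_AE} vanishes identically on $M$.

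Thus the proof I would write is essentially one line: apply Theorem \ref{th_14}, divide through by the nonvanishing scalar $\frac{1-2m}{2\alpha}$, and obtain precisely the stated equation. There is no substantive obstacle here; all the serious computation (the tension field via Theorem \ref{th_7}, the rough Laplacian $\widetilde{\Delta}^{I}\tau(I)$, and the curvature term $-Tr_{g}\widetilde{R}(\tau(I),dI)dI$) has already been carried out in the proof of Theorem \ref{th_14}. The only point worth mentioning explicitly in the write-up is the justification that $1-2m \neq 0$, which ensures the equivalence is genuinely an ``if and only if'' rather than just a sufficient condition.
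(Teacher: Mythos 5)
Your proposal is correct and matches the paper's (implicit) argument: Theorem \ref{th_15} is read off directly from the bitension field formula \eqref{eq_AE} of Theorem \ref{th_14}, using that the prefactor $\frac{1-2m}{2\alpha}$ is nowhere zero since $m\geq 1$ and $\alpha>0$. Your explicit remark justifying $1-2m\neq 0$ is the only detail the paper leaves unstated, and it is the right one to include.
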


\begin{theorem}\label{th_16}
The identity map $I:(M^{2m},F,g)\rightarrow (M^{2m}, g^{\alpha})$ is a proper biharmonic if
and only if the function $\alpha$ is a non constant on $M$, and 
\begin{eqnarray*}
&&\Delta^{I}grad^{}\alpha-Ricci(grad^{}\alpha)+\dfrac{2m-1}{2\alpha}\nabla_{grad^{}\alpha}grad^{}\alpha \nonumber\\
&&-\left(\dfrac{(2m+3)\|grad^{}\alpha\|^{2}}{4\alpha^{2}}-\dfrac{2\Delta(\alpha)}{\alpha}\right) grad^{}\alpha-\nabla_{FV}\nabla_{FV}grad^{}=0.
\end{eqnarray*}
\end{theorem}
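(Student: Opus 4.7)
The plan is to deduce Theorem \ref{th_16} as a direct combination of Theorem \ref{th_15} with the harmonicity criterion established just after Theorem \ref{th_7}. Recall that a biharmonic map is called \emph{proper biharmonic} precisely when it is biharmonic but \emph{not} harmonic; hence the statement is a conjunction of two independent characterizations, one for each of these conditions.

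First, I would invoke Theorem \ref{th_15}, which already provides the complete biharmonicity condition for the identity $I:(M^{2m},F,g)\rightarrow(M^{2m},g^{\alpha})$: namely the vanishing of the bracketed expression appearing in \eqref{eq_AE}. This is exactly the PDE listed in the statement of Theorem \ref{th_16}, so biharmonicity is equivalent to that equation holding on $M$.

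Next, I would recall the remark following Theorem \ref{th_7}, where it was observed that $\widetilde{\tau}(I)=\dfrac{1-2m}{2\alpha}\,grad^{}\alpha$ vanishes on $M^{2m}$ (with $2m\ge 2$, so the factor $1-2m$ is nonzero) if and only if $grad^{}\alpha=0$, equivalently $\alpha$ is constant on $M$. Therefore the identity map fails to be harmonic precisely when $\alpha$ is non-constant.

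Combining these two observations yields the claim: $I$ is proper biharmonic iff it is biharmonic and non-harmonic, iff the PDE of Theorem \ref{th_15} holds \emph{and} $\alpha$ is non-constant. The only non-routine aspect is ensuring that $\widetilde{\tau}(I)\neq 0$ on an open subset whenever $\alpha$ is non-constant; since $grad^{}\alpha$ is nonzero exactly on the set where $\alpha$ is locally non-constant, and $\dfrac{1-2m}{2\alpha}\neq 0$ everywhere, this is immediate, so no genuine obstacle arises. The proof therefore reduces to a clean logical concatenation of the two preceding results with no new computation required.
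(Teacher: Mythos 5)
Your argument is correct and is exactly the route the paper intends: Theorem \ref{th_16} is stated without a separate proof precisely because it is the conjunction of the biharmonicity criterion of Theorem \ref{th_15} with the observation after Theorem \ref{th_7} that $\widetilde{\tau}(I)=\frac{1-2m}{2\alpha}\,grad^{}\alpha$ vanishes if and only if $\alpha$ is constant. Your additional care about $\widetilde{\tau}(I)\neq 0$ wherever $grad^{}\alpha\neq 0$ is a fine (if routine) point, and nothing further is needed.
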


\subsection{The biharmonicity of identity map $I:(M^{2m}, g^{\alpha})\rightarrow (M^{2m},F,g)$}
\begin{theorem}\label{th_17}
The bitension field $\widetilde{\tau}_{2}(I)$ of the identity map \\$I:(M^{2m}, g^{\alpha})\rightarrow (M^{2m},F,g)$ is given by:
\begin{eqnarray}\label{eq_AI}
\widetilde{\tau}_{2}(I)
&=&\dfrac{m-1}{\alpha^{3}}\left[ \Delta^{I}grad^{}\alpha-Ricci(grad^{}\alpha)-\dfrac{m-5}{\alpha}\nabla_{grad^{}\alpha}grad^{}\alpha\right. \nonumber\\
&&\left. +2\left(\dfrac{(m-4)\|grad^{}\alpha\|^{2}}{\alpha^{2}}+\dfrac{\Delta(\alpha)}{\alpha}\right)grad^{}\alpha-\dfrac{1}{2}\nabla_{FV}\nabla_{FV}grad^{}\alpha\right].
\end{eqnarray}
\end{theorem}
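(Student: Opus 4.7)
The plan is to apply the bitension field formula
\begin{equation*}
\widetilde{\tau}_{2}(I) = \widetilde{\Delta}^{I}\tau(I) - Tr_{g^{\alpha}}R(\tau(I), dI)dI,
\end{equation*}
where for the identity map $I:(M^{2m}, g^{\alpha})\to (M^{2m},F,g)$ the pull-back connection along $I$ coincides with the target Levi-Civita connection $\nabla$ of $g$, the rough Laplacian traces with respect to the source metric $g^{\alpha}$ (and so the inner correction uses $\widetilde{\nabla}$ from Theorem \ref{th_1}), and $R$ is the target curvature of $g$. By Theorem \ref{th_8} the vector field to be differentiated is
\begin{equation*}
\tau(I) = \dfrac{m-1}{\alpha^{2}}\,grad^{}\alpha,
\end{equation*}
which will factor out an overall $\dfrac{m-1}{\alpha^{3}}$ after the computations.

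To evaluate $\widetilde{\Delta}^{I}\tau(I)$, I would work in the $g^{\alpha}$-orthonormal frame $\{\tilde{e}_i\}$ of Remark \ref{rem_1}, rewriting
\begin{equation*}
\widetilde{\Delta}^{I}\tau(I) = -\sum_{i=1}^{2m}\bigl(\nabla_{\tilde e_i}\nabla_{\tilde e_i}\tau(I) - \nabla_{\widetilde{\nabla}_{\tilde e_i}\tilde e_i}\tau(I)\bigr),
\end{equation*}
expanding by the Leibniz rule acting on the scalar factor $\dfrac{m-1}{\alpha^{2}}$ and the vector field $grad^{}\alpha$, and then converting back to the original frame $\{e_i\}$ with $e_{2m}=FV$. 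The rescalings $\tilde e_i=\frac{1}{\sqrt{\alpha}}e_i$ for $i<2m$ and $\tilde e_{2m}=\frac{1}{\sqrt{2\alpha}}FV$ will manufacture the factor $\frac{1}{\alpha}$ in front of the standard rough Laplacian $\Delta^{I}grad^{}\alpha$, the factor $-\frac{1}{2}$ in front of the isolated $\nabla_{FV}\nabla_{FV}grad^{}\alpha$ term, and further contributions involving $\Delta(\alpha)$, $\|grad^{}\alpha\|^{2}$ and $\nabla_{grad^{}\alpha}grad^{}\alpha$ coming from $\tilde e_i(\tfrac{1}{\alpha^2})$ and from the difference $\widetilde{\nabla}-\nabla$ supplied by Theorem \ref{th_1} and Corollary \ref{cor_01}. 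The auxiliary identities $(FV)(\alpha)=0$ and $Hess_{\alpha}(X,FV)=0$ from \eqref{eq_A}, \eqref{eq_B} will cancel many cross terms that would otherwise survive in the $\tilde e_{2m}$ slot.

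For the curvature piece, I would write
\begin{equation*}
-Tr_{g^{\alpha}}R(\tau(I), dI)dI = \frac{m-1}{\alpha^{2}}\sum_{i=1}^{2m}R(\tilde e_i, grad^{}\alpha)\tilde e_i,
\end{equation*}
split off the $\tilde e_{2m}=\frac{1}{\sqrt{2\alpha}}FV$ term, and observe that it vanishes thanks to $R(X,Y)V=0$ from \eqref{eq_B} combined with the purity relation \eqref{curva-pure}, namely $R(FV, grad^{}\alpha)FV = F R(V, grad^{}\alpha)FV = 0$. The remaining sum reassembles into $\frac{1}{\alpha}Ricci(grad^{}\alpha)$ (once the missing $e_{2m}$-index is put back in and subtracted), producing the $-Ricci(grad^{}\alpha)$ term in \eqref{eq_AI} with the correct sign.

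The main technical obstacle will be the bookkeeping in the rough Laplacian: I must carefully expand $\nabla_{\tilde e_i}\bigl(\tilde e_i(\tfrac{m-1}{\alpha^2})grad^{}\alpha + \tfrac{m-1}{\alpha^2}\nabla_{\tilde e_i}grad^{}\alpha\bigr)$, handle the resulting $\tilde e_i(\alpha)$ and $Hess_{\alpha}$ coefficients, subtract the correction $\nabla_{\widetilde{\nabla}_{\tilde e_i}\tilde e_i}\tau(I)$ which injects the $\widetilde{\nabla}-\nabla$ formula from Theorem \ref{th_1}, and make the singled-out $FV$-direction (scaled by $\frac{1}{\sqrt{2\alpha}}$ rather than $\frac{1}{\sqrt{\alpha}}$) combine correctly with the rest to yield precisely the coefficients $-\dfrac{m-5}{\alpha}$, $\dfrac{2(m-4)\|grad^{}\alpha\|^{2}}{\alpha^{2}}$, $\dfrac{2\Delta(\alpha)}{\alpha}$, and $-\dfrac{1}{2}$ in \eqref{eq_AI}. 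Substituting the two pieces into the bitension formula and factoring $\dfrac{m-1}{\alpha^{3}}$ then delivers the stated identity.
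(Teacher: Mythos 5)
Your plan is correct and follows essentially the same route as the paper's own proof: the same bitension decomposition $\widetilde{\tau}_{2}(I)=\Delta^{I}\widetilde{\tau}(I)-Tr_{g^{\alpha}}R(\widetilde{\tau}(I),dI)dI$ with $\widetilde{\tau}(I)=\frac{m-1}{\alpha^{2}}grad^{}\alpha$ from Theorem \ref{th_8}, the same adapted frame of Remark \ref{rem_1} with the outer $\nabla$ and inner $\widetilde{\nabla}$ in the rough Laplacian, and the same vanishing of the $FV$-slot curvature term via $R(\cdot,\cdot)V=0$ and purity. One small bookkeeping remark: with the paper's convention $Ricci(X)=\sum_{i}R(X,e_{i})e_{i}$, the surviving sum $\sum_{i}R(\tilde{e}_{i},grad^{}\alpha)\tilde{e}_{i}$ equals $-\frac{1}{\alpha}Ricci(grad^{}\alpha)$ rather than $+\frac{1}{\alpha}Ricci(grad^{}\alpha)$, which is precisely what produces the $-Ricci(grad^{}\alpha)$ term you correctly state in \eqref{eq_AI}.
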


\begin{proof}
 Given a local orthonormal frame $\{\tilde{e}_{i}\}_{i=1,\ldots,2m}$ on $(M^{2m}, g^{\alpha})$  defined by \eqref{eq_R}. The bitension field $\widetilde{\tau}_{2}(I)$ of the identity map\\ $I:(M^{2m},g^{\alpha})\rightarrow(M^{2m},F,g) $ is given by:
\begin{eqnarray}\label{eq_AJ}
\widetilde{\tau}_{2}(I)&=&\Delta^{I}\widetilde{\tau}(I)-Tr_{g^{\alpha}}R(\widetilde{\tau}(I),dI)dI.
\end{eqnarray}
From Theorem $\ref{th_8}$, we have:
\begin{eqnarray*}
\widetilde{\tau}(I)=\dfrac{m-1}{\alpha^{2}}grad^{}\alpha.
\end{eqnarray*}
We use the Theorem $\ref{th_1}$, to calculate $\Delta^{I}\widetilde{\tau}(I)$, then we find:
\begin{eqnarray}\label{eq_AK}
\Delta^{I}\widetilde{\tau}(I)&=&-\sum_{i=1}^{2m}\left( \nabla_{\tilde{e}_{i}}\nabla_{\tilde{e}_{i}}(\dfrac{m-1}{\alpha^{2}}grad^{}\alpha)-\nabla_{\widetilde{\nabla}_{\tilde{e}_{i}}\tilde{e}_{i}}(\dfrac{m-1}{\alpha^{2}}grad^{}\alpha)\right) \nonumber\\
&=&\dfrac{m-1}{\alpha^{3}}\left[ \Delta^{I}grad^{}\alpha+2\left(\dfrac{(m-4)\|grad^{}\alpha\|^{2}}{\alpha^{2}}+\dfrac{\Delta(\alpha)}{\alpha}\right)grad^{}\alpha\right. \nonumber\\
&&\left. -\dfrac{m-5}{\alpha}\nabla_{grad^{}\alpha}grad^{}\alpha-\dfrac{1}{2}\nabla_{FV}\nabla_{FV}grad^{}\alpha\right].
\end{eqnarray}
We also find
\begin{eqnarray}\label{eq_AL}
-Tr_{g^{\alpha}}R(\widetilde{\tau}(I),dI)dI&=&-\dfrac{m-1}{\alpha^{2}}\sum_{i=1}^{2m}R(grad^{}\alpha,\tilde{e}_{i})\tilde{e}_{i}\nonumber\\
&=&-\dfrac{m-1}{\alpha^{3}}Ricci(grad^{}\alpha). 
\end{eqnarray}
Substituting $(\ref{eq_AK})$ and $(\ref{eq_AL})$ into $(\ref{eq_AJ})$, we find $(\ref{eq_AI})$.
\end{proof}

\begin{theorem}\label{th_18}
The identity map $I:(M^{2m}, g^{\alpha})\rightarrow (M^{2m},F,g)$ is a biharmonic if
and only if $m=1$ or
\begin{eqnarray*}
&& \Delta^{I}grad^{}\alpha-Ricci(grad^{}\alpha)-\dfrac{m-5}{\alpha}\nabla_{grad^{}\alpha}grad^{}\alpha \nonumber\\
&& +2\left(\dfrac{\Delta(\alpha)}{\alpha}+\dfrac{(m-4)\|grad^{}\alpha\|^{2}}{\alpha^{2}}\right)grad^{}\alpha-\dfrac{1}{2}\nabla_{FV}\nabla_{FV}grad^{}\alpha=0.
\end{eqnarray*}
\end{theorem}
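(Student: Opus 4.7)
The statement of Theorem~\ref{th_18} is essentially an immediate algebraic consequence of Theorem~\ref{th_17}, so the plan is extremely short: reduce the biharmonicity condition to the vanishing of the expression displayed there, and then factor.

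The plan is the following. By definition, the identity map $I:(M^{2m}, g^{\alpha})\rightarrow (M^{2m},F,g)$ is biharmonic if and only if $\widetilde{\tau}_{2}(I)=0$. By Theorem~\ref{th_17}, the bitension field is
\begin{eqnarray*}
\widetilde{\tau}_{2}(I) &=& \dfrac{m-1}{\alpha^{3}}\Big[ \Delta^{I}grad^{}\alpha-Ricci(grad^{}\alpha)-\dfrac{m-5}{\alpha}\nabla_{grad^{}\alpha}grad^{}\alpha \\
&& +2\Big(\dfrac{(m-4)\|grad^{}\alpha\|^{2}}{\alpha^{2}}+\dfrac{\Delta(\alpha)}{\alpha}\Big)grad^{}\alpha-\dfrac{1}{2}\nabla_{FV}\nabla_{FV}grad^{}\alpha\Big].
\end{eqnarray*}
Since $\alpha:M\to\,]0,+\infty[$ is strictly positive, the scalar prefactor $\dfrac{m-1}{\alpha^{3}}$ vanishes identically on $M$ if and only if $m=1$. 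Thus $\widetilde{\tau}_{2}(I)=0$ is equivalent to the disjunction: either $m=1$, in which case the prefactor kills everything and $I$ is automatically biharmonic; or $m\neq 1$, in which case one may divide through by $\dfrac{m-1}{\alpha^{3}}$ and the biharmonicity condition is exactly the vanishing of the bracketed expression in the statement of Theorem~\ref{th_18}.

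There is essentially no obstacle here, because all of the computational work has already been carried out in the proof of Theorem~\ref{th_17}; this theorem is simply the restatement of $\widetilde{\tau}_{2}(I)=0$ after the factorization. The only thing worth remarking on in the write-up is the positivity of $\alpha$, which is what guarantees that the denominator $\alpha^{3}$ never vanishes and hence that the case split is clean (the two cases are genuinely $m=1$ versus the bracket vanishing, with no extra hidden cases coming from zeros of $\alpha$).
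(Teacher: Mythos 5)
Your argument is correct and is exactly the (implicit) reasoning of the paper, which states Theorem~\ref{th_18} as an immediate consequence of the bitension formula in Theorem~\ref{th_17}: biharmonicity means $\widetilde{\tau}_{2}(I)=0$, and since $\alpha>0$ the prefactor $\frac{m-1}{\alpha^{3}}$ vanishes precisely when $m=1$, leaving otherwise the vanishing of the bracket. Nothing further is needed.
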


\begin{theorem}\label{th_19}
The identity map $I:(M^{2m}, g^{\alpha})\rightarrow (M^{2m},F,g)$ is a proper biharmonic if
and only if $\alpha\neq$ const, $m\neq1$ and
\begin{eqnarray*}
&& \Delta^{I}grad^{}\alpha-Ricci(grad^{}\alpha)-\dfrac{m-5}{\alpha}\nabla_{grad^{}\alpha}grad^{}\alpha \nonumber\\
&&+2\left(\dfrac{\Delta(\alpha)}{\alpha}+\dfrac{(m-4)\|grad^{}\alpha\|^{2}}{\alpha^{2}}\right)grad^{}\alpha-\dfrac{1}{2}\nabla_{FV}\nabla_{FV}grad^{}\alpha=0.
\end{eqnarray*}
\end{theorem}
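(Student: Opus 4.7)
The plan is to obtain Theorem \ref{th_19} as an immediate logical consequence of the two preceding results, namely the harmonicity criterion recorded just after Theorem \ref{th_8} and the biharmonicity characterization of Theorem \ref{th_18}. Recall that a map is called \emph{proper biharmonic} precisely when it is biharmonic but not harmonic. Thus the strategy is a clean "AND" of the two conditions, with a small case analysis showing that the trivial case $m=1$ from Theorem \ref{th_18} cannot produce a proper biharmonic map.

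First I would restate, in one line each, the two inputs. From Theorem \ref{th_8} the tension field is $\widetilde{\tau}(I)=\frac{m-1}{\alpha^{2}}grad^{}\alpha$, so $I$ is harmonic iff $\frac{m-1}{\alpha^{2}}grad^{}\alpha = 0$, i.e.\ iff $m=1$ or $\alpha$ is constant on $M$. From Theorem \ref{th_18}, $I$ is biharmonic iff $m=1$ or the fourth-order PDE
\begin{equation*}
\Delta^{I}grad^{}\alpha-Ricci(grad^{}\alpha)-\dfrac{m-5}{\alpha}\nabla_{grad^{}\alpha}grad^{}\alpha+2\left(\dfrac{\Delta(\alpha)}{\alpha}+\dfrac{(m-4)\|grad^{}\alpha\|^{2}}{\alpha^{2}}\right)grad^{}\alpha-\dfrac{1}{2}\nabla_{FV}\nabla_{FV}grad^{}\alpha=0
\end{equation*}
is satisfied.

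Next I would perform the combination. Assume $I$ is proper biharmonic. Non-harmonicity forces $\frac{m-1}{\alpha^{2}}grad^{}\alpha\neq 0$, which in turn forces \emph{both} $m\neq 1$ and $\alpha$ non-constant. Because $m\neq 1$, the disjunction in Theorem \ref{th_18} cannot be resolved by the trivial alternative, so the displayed PDE must hold. This proves the "only if" direction. For the converse, suppose $\alpha$ is non-constant, $m\neq 1$, and the PDE holds. The PDE plus Theorem \ref{th_18} yields biharmonicity; the conjunction $m\neq 1$ and $\alpha$ non-constant yields non-harmonicity via Theorem \ref{th_8}. Together these give proper biharmonicity.

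There is essentially no hard computational step here, since all the analytic content has already been extracted in Theorems \ref{th_8}, \ref{th_17}, and \ref{th_18}. The only subtlety worth flagging is making sure the "or" in Theorem \ref{th_18} (biharmonic if $m=1$ \emph{or} the PDE) is handled correctly: one must explicitly note that the $m=1$ branch is incompatible with non-harmonicity, so that in the proper biharmonic setting the PDE becomes necessary rather than merely sufficient. That logical point is the single place where care is required; the rest of the argument is a direct synthesis of the previously established statements.
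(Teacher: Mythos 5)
Your argument is correct and is precisely the route the paper intends: Theorem \ref{th_19} is stated without a separate proof because it is the immediate conjunction of the harmonicity criterion following Theorem \ref{th_8} (harmonic iff $\alpha$ constant or $m=1$) with the biharmonicity characterization of Theorem \ref{th_18}, exactly as you combine them. Your explicit handling of the $m=1$ branch of the disjunction is the right point of care and matches the implicit reasoning in the paper.
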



\end{document}